\newtheorem{THM}{Theorem}
\newtheorem{Lemma}[THM]{Lemma}
\newtheorem{Cor}[THM]{Corollary}
\newcommand{\bR}{\mathbb{R}}
\newcommand{\be}{\begin{equation}}
\newcommand{\ee}{\end{equation}}
\newcommand{\mS}{\mathbb{S}}
\newcommand{\ip}[2]{\left\langle#1,#2\right\rangle}
\newcommand{\lp}{\left(}
\newcommand{\rp}{\right)}
\newcommand{\lb}{\left[}
\newcommand{\rb}{\right]}
\newcommand{\lc}{\left\{}
\newcommand{\rc}{\right\}}
\newcommand{\lab}{\left|}
\newcommand{\rab}{\right|}
\newcommand{\Lap}{\Delta}
\newcommand{\sD}{\mathcal{D}}
\newcommand{\E}{\mathbb{E}}
\newcommand{\Prob}{\mathbb{P}}
\newcommand{\vphi}{\varphi}
\newcommand{\QV}[1]{\left\langle#1\right\rangle}
\newcommand{\eps}{\varepsilon}
\newcommand{\ltwo}{\log_{(2)}}
\newcommand{\lthree}{\log_{(3)}}
\DeclareMathOperator{\spn}{span}
\newcommand\bigzero{\makebox(0,0){\text{\Huge0}}}
\begin{document}

\title[Martingales arising from minimal submanifolds, etc.]{Martingales arising from minimal
submanifolds and other geometric contexts}
\author{Robert W.\ Neel}
\address{Department of Mathematics, Lehigh University, Bethlehem, PA}
\begin{abstract} 
We consider a class of martingales on Cartan-Hadamard manifolds that includes Brownian motion on a minimal submanifold. We give sufficient conditions for such martingales to be transient, extending previous results on the transience of minimal submanifolds. We also give conditions for the almost sure convergence of the angular component (in polar coordinates) of a martingale in this class, including both  the negatively pinched case (using earlier results on martingales of bounded dilation), and the radially symmetric case with quadratic decay of the upper curvature bound. Applied to minimal submanifolds, this gives curvature conditions on the ambient Cartan-Hadamard manifold under which any minimal submanifold admits a non-constant, bounded, harmonic function. Though our discussion is primarily motivated by minimal submanifolds, this class of martingales includes diffusions naturally associated to ancient solutions of mean curvature flow and to certain sub-Riemannian structures, and we briefly discuss these contexts as well. Our techniques are elementary, consisting mainly of comparison geometry and Ito's rule.
\end{abstract}
\subjclass[2010]{Primary 58J65; Secondary 53C42 60H30}
\email{robert.neel@lehigh.edu}

\maketitle

\section{Introduction}

We study a class of degenerate martingales on Cartan-Hadamard manifolds, specifically their transience and angular behavior in large time. The results we obtain (as well as the methods of proof) are similar to those for Brownian motion on Cartan-Hadamard manifolds. The motivation is that such martingales arise in multiple geometric contexts, such as Brownian motion on minimal submanifolds, Brownian motion along a smooth mean curvature flow, and the canonical diffusion associated with certain sub-Riemannian geometries. This allows us to re-prove and extend geometric results, such as results on the transience of minimal submanifolds, and also provides a common perspective on what might otherwise seem like disparate objects in geometry, at least at the level of these relatively coarse properties.

Our discussion is motivated by the case of minimal submanifolds. In particular, let $M$ be a (smooth, complete) Cartan-Hadamard manifold of dimension $m$.  Markvorsen and Palmer \cite{MarkPalmGAFA} prove that if $N$ is a complete, $n$-dimensional, minimally immersed submanifold of M and either $n = 2$ and the sectional curvatures of M are bounded above by $-a^2 < 0$, or $n \geq 3$, then $N$ is transient. They prove this result by first deriving non-trivial capacity estimates. The more restrictive case when $n\geq 2$ and the sectional curvatures of $M$ are bounded above by $-a^2 < 0$ also follows from estimates on the first Dirichlet eigenvalue (or the fundamental tone) of submanifolds of a Cartan-Hadamard manifold, as discussed in more detail by Bessa and  F{\'a}bio \cite{BessaFabio}. In addition, we mention that Stroock (see Theorem 5.23 of \cite{StroockGeo}) proves that minimal submanifolds of dimension 3 or more of Euclidean space are transient (that is, the case $n\geq 3$ and $M=\bR^m$) using a simple stochastic argument which is very much in the spirit of the present work. (For general background on the role of transience in geometry and analysis, consult \cite{Grigoryan}.)

We wish to further this line of inquiry. Our first main result (concerning transience), applied to minimal submanifolds, yields the following.

\begin{Cor}\label{Cor:MinimalTrans}
Let $M$ be a Cartan-Hadamard manifold of dimension $m\geq 3$, and let $N$ be a properly immersed minimal submanifold of dimension $n$, for $2\leq n < m$. Then if either of the following two conditions hold:
\begin{enumerate}
\item $n=2$ and, in polar coordinates around some point, $M$ satisfies the curvature estimate
\[
K(r,\theta,\Sigma) \leq -\frac{1+2\eps}{r^2\log r} \quad \text{for $r>R$, and for all $\theta$ and $\Sigma\ni\partial_r$,}
\]
for some $\eps>0$ and $R>1$, or
\item $n\geq 3$, 
\end{enumerate}
we have that $N$ is transient.
\end{Cor}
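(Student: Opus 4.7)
The plan is to realize Brownian motion on $N$ as a martingale on $M$ of exactly the type handled by the paper's first main transience theorem, and then to verify that theorem's hypotheses by Hessian comparison in $M$. Let $X_t$ denote Brownian motion on $N$, viewed in $M$ through the immersion. For $f\in C^{2}(M)$, the Gauss formula together with Ito's rule gives
\[
df(X_{t}) = dM^{f}_{t} + \tfrac{1}{2}\,\text{tr}_{T_{X_{t}}N}(\Hess_{M} f)\,dt + \tfrac{1}{2}\langle \nabla f, H_{N}\rangle\,dt,
\]
and the last term drops because $N$ is minimal. Thus $X_t$, regarded as a process in $M$, is a martingale whose effective generator is $\tfrac{1}{2}\text{tr}_{T_{X_{t}}N}\Hess_{M}$, exactly the form considered in the paper; properness of the immersion guarantees that transience of $X_t$ on $N$ is witnessed by $r(X_{t})\to\infty$ in $M$, where $r$ is the distance from a fixed pole.

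Applied to $f = r$, Ito's rule becomes
\[
dr(X_{t}) = dM_{t} + \tfrac{1}{2}\,\text{tr}_{T_{X_{t}}N}(\Hess_{M} r)\,dt, \qquad d\langle M\rangle_{t} = |\nabla^{N} r|^{2}\,dt \leq dt,
\]
so the main theorem's hypotheses reduce to a lower bound on $\text{tr}_{T_{x}N}(\Hess_{M} r)$. For case (2), the standard Hessian comparison $\Hess_{M} r \geq r^{-1}(g - dr\otimes dr)$ on any Cartan-Hadamard manifold yields $\text{tr}_{T_{x}N}(\Hess_{M} r) \geq (n-1)/r$, which for $n\geq 3$ is the strictly super-critical Bessel-type drift the main theorem requires. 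For case (1) the Euclidean comparison is only critical when $n=2$, so I would instead apply Jacobi-field comparison against the radially symmetric model of curvature $-(1+2\eps)/(r^{2}\log r)$, yielding $\Hess_{M} r \geq (h'/h)(g - dr\otimes dr)$ for $r>R$, where $h$ solves the corresponding Jacobi ODE. An asymptotic analysis gives $h(r) \sim r(\log r)^{1+2\eps}$, and after tracing over $T_{x}N$ (using $|\nabla^{N} r|^{2}\leq 1$) and time-changing the martingale part to a Brownian motion, the radial process is a one-dimensional diffusion whose Feller integrability comes down to $\int^{\infty} dr/(r(\log r)^{1+2\eps}) < \infty$ --- exactly the $\eps>0$ input the main theorem requires.

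The crux is case (1): one must verify that the $\eps$-margin in the curvature hypothesis propagates cleanly through Jacobi-field comparison, the trace over $T_{x}N$, and the time change induced by the sub-unit quadratic variation $|\nabla^{N} r|^{2}\leq 1$, without being eroded. This is the classical logarithmic borderline for two-dimensional radial diffusions, and the sharp exponent $1+2\eps$ (rather than $1$) in the curvature bound is precisely what converts the borderline divergent integral $\int^{\infty} dr/(r\log r)$ into the convergent $\int^{\infty} dr/(r(\log r)^{1+2\eps})$, thereby forcing transience. Case (2) is routine once the Ito-plus-Gauss reduction is in place.
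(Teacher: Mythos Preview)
Your reduction matches the paper's exactly: Brownian motion on $N$ is a rank-$n$ martingale on $M$ via Ito plus the Gauss formula (the mean-curvature term drops by minimality), properness identifies explosion on $N$ with explosion on $M$, and then Theorem~\ref{THM:Trans} applies verbatim. That is the whole proof of the corollary in the paper.

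The one point of departure is in how you sketch the $n=2$ case of Theorem~\ref{THM:Trans} itself. The paper constructs an explicit local supermartingale $1/(\log r_t)^{\eps}$ and argues by optional stopping; you instead propose a time change plus a Feller-type integrability test. These are equivalent in spirit --- the scale function appearing in Feller's test \emph{is} the supermartingale --- but your phrasing ``the radial process is a one-dimensional diffusion'' is not literally correct: after time-changing by $\int_0^t \cos^2\vphi_s\,ds$, the drift of $r$ still depends on $\vphi_t$, so the process is not Markov and Feller's test does not apply directly. What you actually need is a comparison step: the time-changed drift is bounded below by $\tfrac{1}{2}\,\hat{G}'/\hat{G}$ because $(1+\sin^2\vphi)/\cos^2\vphi \geq 1$, and then comparison with the genuine one-dimensional diffusion having that drift gives transience. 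Carrying this out amounts to checking that $1/\hat{G}(r_t)$ (or equivalently $1/(\log r_t)^{\eps}$, asymptotically) is a local supermartingale --- which is precisely the paper's argument. So your route is sound once the ``one-dimensional diffusion'' shortcut is replaced by the comparison, and it then collapses into the paper's.
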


Here $K(r,\theta,\Sigma)$ is the sectional curvature of a plane $\Sigma$ in $T_{(r,\theta)}M$ (in polar coordinates). Our choice of $2\eps$ rather than $\eps$ in the bound in the $n=2$ is simply for convenience.

Obviously, this improves the curvature bound in the case $n=2$. Indeed, the bound given above is sharp (in the sense that the theorem is not true for $\eps=0$), as discussed after Theorem \ref{THM:Trans}. Further, this result is a special case of a transience result for a broader class of martingales (given in Theorem \ref{THM:Trans}, of which the above is an immediate corollary) that we call rank-$n$ martingales and that also includes diffusions associated to ancient solutions of mean curvature flow and to certain sub-Riemannian structures (more on this in a moment). 

Our second main class of results consists of theorems giving conditions for the almost sure angular convergence of a rank-$n$ martingales. Using previous results on martingales of bounded dilation (more on this in a moment), we see that rank-$n$ martingales in Cartan-Hadamard manifolds of negatively pinched curvature have (random) angular limits. Applying this result to rank-$n$ martingales arising from minimal submanifolds gives

\begin{Cor}\label{Cor:UsingKendall}
Suppose that $M$ is Cartan-Hadamard manifold of dimension $m\geq 3$, and that the sectional curvatures of $M$ are bounded above and below by negative constants.  Let $N$ be an $n$-dimensional, properly immersed minimal submanifold (in $M$), for $n\geq 2$. Then $N$ admits a non-constant, bounded, harmonic function.
\end{Cor}

In order to explore the possibility of relaxing the upper curvature bound, we analyze the angular process in more detail, in the case when $n\geq 3$. We give complementary results on the finiteness or lack thereof of the quadratic variation of the martingale part of the angular process in Theorems \ref{THM:MartCon} and  \ref{THM:MartNoCon}. The role of the drift of the angular process is more subtle. Restricting our attention to radially symmetric Cartan-Hadamard manifolds, we show that the angular process converges even if some quadratic decay of the upper curvature bound is allowed (see Theorem \ref{THM:AngleCon}). Applying this to minimal submanifolds, we have

\begin{Cor}\label{Cor:MinHarm}
Suppose that $M$ is Cartan-Hadamard manifold of dimension $m\geq 4$, and that $M$ is radially symmetric around some point $p$.  Let $(r,\theta)$ be polar coordinates around $p$. Let $N$ be an $n$-dimensional, properly immersed minimal submanifold (in $M$), with $3\leq n<m$. Then if $M$ satisfies the curvature estimate
\[
-a^2 \leq
K(r,\theta,\Sigma) \leq -\frac{2+\eps}{r^2} \quad\text{when $r>R$, and for all $\theta$ and $\Sigma\ni\partial_r$,}
\]
for some $a>0$, $\eps>0$, and $R>1$, we have that $N$ admits a non-constant, bounded, harmonic function.
\end{Cor}

As mentioned, the properties of rank-$n$ martingales which we establish are motivated by minimal submanifolds, one consequence being that the applications in other geometric contexts, namely to mean curvature flow and certain sub-Rie\-man\-nian structures, appear less immediately compelling. For example, it's not clear how the transience of an associated diffusion along an ancient solution to mean curvature flow fits into the theory of such flows. In the sub-Riemannian case, the transience of the associated diffusion is a more natural property (akin to the transience of Brownian motion on a Riemannian manifold), but our discussion only applies to a rather restricted class of sub-Riemannian structures (ones equipped with an extension to a Riemannian metric which makes the diffusion generated by the sub-Laplacian a rank-$n$ martingale, as explained in Section \ref{Sect:SubR}). Nonetheless, we explain how these results apply in these two cases to the extent feasible, for a couple of reasons. We hope that this demonstrates the natural use of stochastic methods in these areas and indicates that further developments might be possible. It also highlights the common aspects of these geometric situations, in terms of basic properties accessible to stochastic analysis, and clarifies the role of various hypotheses. For instance, the fact that the proof of transience for a minimal submanifold applies unchanged to a class of sub-Riemannian structures shows that, in a sense, only the minimality of the submanifold is relevant, and not the fact that it's a submanifold. (Said differently, the integrability of the tangent spaces or lack thereof, in the sense of Frobenius, is irrelevant.)

From a technical point of view, not only are we able to treat a variety of geometric objects simultaneously, but our techniques are more or less completely elementary. On the geometric side, we use standard comparison results to pass from curvature estimates to estimates on Jacobi fields and their indices. In terms of stochastic analysis, Ito's rule (with the right choice of function to compose with our rank-$n$ martingale) and basic stopping time arguments are all that we use.

We have already alluded to a relationship between this work and previous work on Brownian motion on Cartan-Hadamard manifolds, such as the work of March \cite{March} or Hsu \cite{HsuBook}, which will become clear as we go. We now close this section with a further discussion of the connection with work on bounded-dilation martingales in geometry. Indeed, any rank-$n$ martingale with $n\geq 2$ is a special case of a martingale of bounded dilation (in fact, of 1-bounded dilation); see Definition 2.2.10 of Kendall's informative article \cite{KendallSurvey} and the surrounding discussion for the precise definition. Such martingales arise in the study of harmonic maps from one Riemannian manifold to another, as the image of a Brownian motion on the domain under the mapping. Various natural properties of such martingales have been studied, such as upper bounds on exit times from geodesic balls in Darling \cite{DarlingExits} and properties of the radial part in Kendall \cite{KendallRadial}. A central application of martingales of bounded dilation is to prove generalized Picard little theorems for harmonic maps (again, see Theorem 2.3.6 of \cite{KendallSurvey} and the references therein for an overview of such results). In fact, it was in the process of proving Picard little theorems that the result on angular convergence of bounded-dilation martingales on Cartan-Hadamard manifolds with negatively-pinched curvatures, as mentioned above, was established (see Theorem 2.3.5 of \cite{KendallSurvey}). Since rank-$n$ martingales are a narrower class of processes, we attempt to give more precise results than might be possible in more generality, such as the sharp curvature decay rate for transience of rank-2 martingales and the quadratic decay of the upper curvature bound for angular convergence of a rank-$n$ (with $n\geq 3$) martingale on a radially symmetric Cartan-Hadamard manifold. This perspective emphasizes the difference in the stochastic setting between previous work on bounded-dilation martingales and what we establish here for the more specialized situation of rank-$n$ martingales, but of course this reflects the fact that minimal submanifolds, and the other geometric situations we consider, are a different geometric problem than that of a harmonic map (even one of bounded dilation) from a given domain manifold into another manifold.

Finally, we mention that results showing that one can find a martingale on a manifold with a prescribed limit under some geometric constraints have been given by Darling \cite{DarlingExistence} and Arnaudon \cite{ArnaudonExistence}. These can be used to give a probabilistic construction of a solution to the harmonic mapping problem with given boundary values in certain cases. Unfortunately, such results do not apply in any obvious way to the construction of, say, a solution to the mean curvature flow, since that requires constructing a rank-$n$ martingale, for the appropriate value of $n$, which is a much more restricted class than that of just a martingale. Indeed, for an idea of how one might construct a solution to mean curvature flow via stochastic methods, see Soner and Touzi \cite{SonerTouzi}.

\section{Definitions and preliminary results}

\subsection{Rank-$n$ martingales}

Let $M$ be a Cartan-Hadamard manifold of dimension $m$; we will always assume that $M$ is smooth and complete. The main object of study is what we will call a rank-$n$ martingale, where $n\in\lc 1, \ldots, m-1 \rc$. This is a (continuous) process $X_t$ on $M$, possibly defined up to some explosion time $\zeta$, which (informally) is infinitesimally a Brownian motion on $\Lambda_t$, where $\Lambda_t$ is a (path) continuous, adapted choice of $n$-dimensional  subspace of $T_{X_t}M$. To be be more specific, we suppose that $X_t$ locally (in space and time) satisfies an SDE of the form
\begin{equation}\label{Eqn:RankN}
dX_t = \sum_{i=1}^n v_{i,t} \, dW^i_t
\end{equation}
where $(v_{1,t}, \ldots,  v_{n,t})$ is a continuous, adapted $n$-tuple of orthonormal vectors in $T_{X_t}M$. (Note that we're not claiming that a unique solution necessarily exists for any such choice of $v_{i,t}$, just that $X_t$ is a solution to such an equation.) Naturally, we think of the $v_{i,t}$ as a orthonormal basis for the $n$-dimensional subspace $\Lambda_t$ mentioned above. We generally think of $X_t$ as starting from a single point $X_0$, but this is not necessary (and it's occasionally useful to consider a more general initial distribution).

For clarity, consider a particularly nice example of such a process, Brownian motion on a (properly embedded) $n$-dimensional minimal submanifold. In that case, we can, on some local chart, let $v_{i,t} = v_i(X_t)$ for a smooth, orthonormal frame $v_1,\ldots,v_n$. Alternatively, if we want a smooth global (but non-Markov) choice of $v_{i,t}$, we can let them be given by the parallel transport of an orthonormal frame at $X_0$ along $X_t$, in the spirit of the Ells-Elworthy-Malliavin construction of Brownian motion on a manifold.

Returning to the general case of Equation \eqref{Eqn:RankN}, note that the differentials are Ito differentials, and thus $X_t$ is in fact an $M$-martingale (see \cite{HsuBook} for basic definitions in stochastic differential geometry). Also note that the law of $X_t$ should generally be thought of as depending on $\Lambda_t$, rather than on the specific choice of frame $v_{i,t}$.  In particular, let $\tilde{v}_{i,t}$ be another set of orthonormal frames for $\Lambda_t$. Then write $\tilde{v}_{i,t} = \sum_{j=1}^n c_{i,j,t} v_{j,t}$ for $i,j=1,\ldots, n$,  where $\lb c_{i,j}\rb_t$ is a continuous, adapted $O(n)$-valued process, and suppose that $\tilde{W}^i_t$ for $i=1,\ldots, n$ are independent Brownian motions satisfying the system
\[
\begin{bmatrix} d\tilde{W}_t^1  \\  \vdots \\ d\tilde{W}^n_t \end{bmatrix} = \lb c_{i,j}\rb_t
\begin{bmatrix} dW_t^1  \\  \vdots \\ dW^n_t \end{bmatrix} .
\]
Then we see that $X_t$ also satisfies the SDE
\[
dX_t = \sum_{i=1}^n \tilde{v}_{i,t} \, d\tilde{W}^i_t .
\]

Finally, we mention that our results extent in a natural way to the situation where our rank-$n$ martingale is (possibly) stopped at some stopping time prior to explosion. In particular, our results will apply on the set of paths that survive until explosion (which may mean they survive for all time, if the explosion time is infinite). We discuss this a bit further after the proof of Lemma \ref{BasicLemma} and at the end of Section \ref{Sect:MinSub}.  Note that this is a fairly natural situation. For example, if $N$ is a minimal submanifold-with-boundary, then Brownian motion on $N$ stopped at the boundary will be a rank-$n$ martingale stopped at the first hitting time of the boundary, as was used in \cite{MyJGA}. Nonetheless, to make the exposition cleaner, we will simply deal with rank-$n$ martingales as introduced above, aside from the two brief mentions just indicated.

\subsection{Comparison geometry}\label{Sect:Compare}

Again, let $M$ be a (smooth, complete) Cartan-Hadamard manifold of dimension $m$. For some point $p\in M$, let $(r,\theta)\in [0,\infty)\times \mS^{m-1}$ be polar coordinates; this gives a global coordinate system for $M$. Let $X_t$ be a rank-$n$ martingale on $M$, started at some point $X_0$.  We can write $X_t$ in coordinates as $(r_t,\theta_t)= \lp r(X_t),\theta(X_t)\rp$.

We wish to understand the behavior of $r_t$ in terms of the SDE it satisfies. We note that $r$ is smooth everywhere on $M$ except for $p$.  We'll see in a moment (in Lemma \ref{BasicLemma}) that if $X_t$ starts at $p$ it immediately leaves, and that, from anywhere else, the process almost surely never hits $p$. Thus we will assume that the process is not at $p$ (equivalently that that $r_t$ is positive), so that the behavior of $r$ at $p$ won't bother us.  (The situation is exactly analogous to what one sees for the the radial component of Brownian motion on Euclidean space.)
As usual, let $\partial_r=\nabla r$ be the unit radial vector field. Since we're free to rotate our orthonormal frame without changing the law of $X_t$, as discussed above, assume for convenience that $v_{2,t}, \ldots, v_{n,t}$ are perpendicular to $\partial_r$. Next, let $\vphi_t$ be the angle between $v_{1,t}$ and $\partial_r$, so that $\ip{v_{1,t}}{\partial_r}=\cos\vphi_t$. (While $\vphi_t$ is not uniquely determined, $\cos\vphi_t$ is, and we will see that it is only $\cos\vphi_t$ that matters in what follows. Nonetheless, we find it geometrically appealing to make reference to $\vphi_t$, and it causes no harm.)

Applying Ito's formula, we first see that that martingale part of $r_t$ evolves as $\cos\vphi_t \,d W_t$ for some Brownian motion $W_t$. (If we always choose our orthonormal frame as just discussed, then $W_t=W^1_t$, but in general we only care about the law of $r_t$, so this isn't necessary. More generally, $W_t$ is adapted to the filtration generated by $W^1_t,\ldots, W^n_t$, and $\vphi_t$ is the angle between $\Lambda_t$ and $\partial_r$.) In order to understand the bounded variation (or drift) term in the SDE, let $\gamma$ be the (unique) geodesic from $p$ to $X_t$. Then for $i=2,\ldots n$, let $J_i(s), s\in [0,r_t]$ be the (unique) Jacobi field along $\gamma$ with $J_i(0)=0$ and $J_i(r_t)=v_{i,t}$. For $i=1$ let $J_1(s), s\in [0,r_t]$ be the Jacobi field which is $0$ at $p$ and equal to the projection of $v_{1,t}$ onto the orthogonal complement of  $\partial_r$ (as a subspace of $T_{r_t}M$). In particular, $J_1(r_t)$ has length $\sin\vphi_t$ (assuming $\vphi_t$ is chosen to make this non-negative). Let $I(J_i)$ be the index of the Jacobi field $J_i$. Since the Hessian of $r$ is given in terms of these indices, we see that $r_t$ satisfies the SDE
\begin{equation}\label{Eqn:BasicDecomp}
dr_t = \cos\vphi_t \,d W_t + \frac{1}{2} \lp\sum_{i=1}^n I(J_i)\rp \, dt .
\end{equation}

We will deal with the indices $I(J_i)$ via standard comparison geometry (see Theorem 1.1 of \cite{SchoenYau} for the relevant version of the Hessian comparison theorem, and see Section 3.4 of \cite{HsuBook} for the application to Brownian motion). Since we will work with Cartan-Hadamard manifolds, we assume that all sectional curvatures are non-positive, and thus we take our comparison functions to be non-positive as well. In particular, let $\hat{K}(r)$ be a continuous, non-positive function on $[0,\infty)$ such that
\[
\hat{K}(r) \geq \max_{\theta, \Sigma\ni\partial_r} K(r,\theta,\Sigma) ,
\]
where the maximum is taken over all $\theta\in\mS^{m-1}$ and all two-planes $\Sigma$ in the tangent space at $(r,\theta)$ that contain $\partial_r$ (so that we deal with estimates on what are commonly called the radial curvatures). Further, let $\hat{G}(r)$ be the solution to the (scalar) Jacobi equation
\[
\hat{G}^{\prime\prime}(r) + \hat{K}(r) \hat{G}(r)=0, \quad \hat{G}(0)=0, \hat{G}^{\prime}(0)=1, \quad\text{on $r\in[0,\infty)$.}
\]
Then we have the comparison
\[
I(J_i) \geq |J_i(r_t)|^2\frac{\hat{G}^{\prime}(r_t)}{\hat{G}(r_t)} \quad \text{for each $i=1,\ldots,n$.}
\]
Similarly, 
let $\check{K}(r)$ be a continuous, non-positive function on $[0,\infty)$ such that
\[
\check{K}(r) \leq \min_{\theta, \Sigma\ni\partial_r} K(r,\theta,\Sigma) ,
\]
and let $\check{G}(r)$ be the solution to the (scalar) Jacobi equation
\[
\check{G}^{\prime\prime}(r) + \check{K}(r) \check{G}(r)=0, \quad \check{G}(0)=0, \check{G}^{\prime}(0)=1, \quad\text{on $r\in[0,\infty)$.}
\]
Then we have the comparison
\[
I(J_i) \leq |J_i(r_t)|^2\frac{\check{G}^{\prime}(r_t)}{\check{G}(r_t)} \quad \text{for each $i=1,\ldots,n$.}
\]
(We can think of $\hat{G}$ or $\check{G}$ as giving the analogous Jacobi fields of a radially symmetric comparison manifold.)
Observe that $|J_i(r_t)|^2 =1$ for $i=2,\ldots,n$ and $|J_1(r_t)|^2 =\sin^2\vphi_t$. Thus we have that (refer to Equation \eqref{Eqn:BasicDecomp})
\[\begin{split}
& \frac{1}{2} \lp\sum_{i=1}^n I(J_i)\rp \geq \frac{n-1+\sin^2\vphi_t}{2} \frac{\hat{G}^{\prime}(r_t)}{\hat{G}(r_t)} \\
 \text{or} \quad & \frac{1}{2} \lp\sum_{i=1}^n I(J_i)\rp \leq \frac{n-1+\sin^2\vphi_t}{2} \frac{\check{G}^{\prime}(r_t)}{\check{G}(r_t)} .
\end{split}\]
(Of course, we also have that $\hat{G}$ gives a lower bound, and $\check{G}$ an upper bound, on the lengths of the corresponding Jacobi fields, by the Rauch comparison theorem.)
Note that $\hat{G}$ and $\check{G}$ depend only on the geometry of $M$ (and $r$). Further, these expressions depend only on $\Lambda_t$ and not on the particular choice of the $v_{i,t}$ (so they are ``invariant'' in the geometric language sometimes used), which justifies the local (or even pointwise) nature of our choice of frame.

As mentioned, we will frequently denote $ \frac{1}{2} \lp\sum_{i=1}^n I(J_i)\rp$ by $v_t$. We now collect the specific estimates on the lengths of Jacobi fields (given in terms of $\hat{G}$ and $\check{G}$) and on $v_t$ that we will use. To make the notation less cumbersome, throughout the rest of the paper we will write $\ltwo r$ for $\log(\log r)$ and $\lthree r$ for $\log( \log(\log r ))$.

\subsection{Constant curvature estimates}\label{Sect:ConstEst}

First, because we assume that $M$ is a Cartan-Hadamard manifold, we can always take $\hat{K}\equiv 0$, for which we have
\[
\hat{G}(r)=r \quad\text{and}\quad v_t \geq \frac{n-1+\sin^2\vphi_t}{2r_t}
\]
(and with equality for $v_t$ if $K\equiv 0$, that is, if $M$ is in fact Euclidean space). This also shows that our comparison functions $\hat{G}$ and $\check{G}$ are always increasing and positive for positive $r$.

Next, suppose that, for some $R>0$ and some $a>0$,
\[
K(r,\theta,\Sigma) \geq -a^2 \quad\text{when $r>R$, and for all $\theta$ and $\Sigma\ni\partial_r$.} 
\]
Then we can take $\check{K}(r)$ to be non-positive and equal to $-a^2$ when $r\geq A$, for some $A>R$. We compute that
\[
\check{G}(r) = \frac{c_1}{a}\sinh(ar) + \frac{c_2}{a}\cosh(ar)
= \frac{1}{a}\lp c_2 + c_1\tanh(ar)\rp \cosh(ar) \quad\text{for $r\geq A$,}
\]
for some constants $c_1$ and $c_2$. If $c_3=c_1+c_2$, then $c_3>0$, using that $\check{G}(r)$ is positive for positive $r$. Further, an elementary computation shows that, for any $\delta>0$, there exists $B>A$ such that
\[
\frac{\check{G}^{\prime}(r)}{\check{G}(r)} \leq a(1+\delta) \quad\text{for $r \geq B$.}
\]

\subsection{A variable curvature estimate useful when $n=2$}\label{Sect:N2}

The following estimate, though stated in general, will be used in the case $n=2$.

Suppose that for some $\eps>0$ and some $R>1$, we have
\[
K(r,\theta,\Sigma) \leq -\frac{1+2\eps}{r^2\log r} \quad\text{when $r>R$, and for all $\theta$ and $\Sigma\ni\partial_r$.}
\]
(Here we use ``$2\eps$'' rather than ``$\eps$'' simply to have a more convenient constant in the following estimates.)  Then we can take $\hat{K}(r)$ to be non-positive and equal to 
\[
-\frac{1+\eps}{r^2\log r} \lp 1+\frac{\eps}{\log r}\rp \quad \text{for $r\geq A$,}
\]
for some $A>R$.
We see that
\[
G_1(r) = r\lp \log r\rp^{1+\eps} \quad\text{and}\quad
G_2(r) = G_1(r)  \int^r \frac{1}{s^2\lp\log s \rp^{2+2\eps}} \, ds 
\]
are a basis for the space of solutions to the Jacobi equation over $r\in[A,\infty)$. Thus
\[
\hat{G}(r) = c_1 G_1(r) + c_2 G_2(r) \quad\text{for $r\in[A,\infty)$,}
\]
for some constants $c_1$ and $c_2$. These constants are determined by the initial conditions at $r=A$. Nonetheless, $ \int^r 1/(s^2\lp\log s \rp^{2+2\eps}) \, ds$ is increasing and bounded, so let $\alpha\in(0,\infty)$ be its limit as $r\rightarrow \infty$. Then
\[
\hat{G}(r) \sim (c_1 +c_2\alpha) G_1(r) \quad\text{as $r\rightarrow\infty$,}
\]
where ``$\sim$'' means that the ratio of the two sides approaches 1.
We know that $\hat{G}(r)$ is positive and increasing for positive $r$, so if we let $c_3=c_1 +c_2\alpha$, then $c_3>0$. Explicit computation shows that $\hat{G}^{\prime}(r)$ is given by
\[\begin{split}
 & \lp c_1+c_2 \int^r \frac{1}{s^2\lp\log s \rp^{2+2\eps}} \, ds \rp
 (1+\eps)\lp\log r\rp^{\eps} \lp \frac{\log r}{1+\eps} +1 \rp +\frac{c_2}{r\lp\log r\rp^{1+\eps}} \\
&\sim c_3 \lb \lp\log r\rp^{1+\eps} + (1+\eps)\lp\log r\rp^{\eps} \rb \quad\text{as $r\rightarrow \infty$}.
\end{split}\]
Dividing this by $\hat{G}$ (and considering the large $r$ behavior) shows that for some $B>A$ and $c>0$, we have
\[
v_t \geq c\lp n-1+\sin^2\vphi\rp \lp \frac{1}{2r_t}+\frac{1+\eps}{2r_t \log r_t} \rp
\quad\text{for $r_t\geq B$}.
\]

\subsection{Variable curvature estimates useful when $n\geq 3$}\label{Sect:NBig}

The previous estimate will be useful to us in the $n=2$ case. We now develop similar estimates for use in the $n\geq 3$ case.

Suppose that for some $\eps>0$ and some $R>1$, we have
\[
K(r,\theta,\Sigma) \leq -\frac{\frac{1}{2}+\eps}{r^2\log r} \quad\text{when $r>R$, and for all $\theta$ and $\Sigma\ni\partial_r$.}
\]
We may as well assume that $\eps<1/2$, since if the above holds for some $\eps$, it holds for any smaller $\eps$.
Then we can take $\hat{K}(r)$ to be non-positive and equal to 
\[
-\frac{\frac{1}{2}+\eps}{r^2\log r} \lp 1-\frac{\frac{1}{2}-\eps}{\log r}\rp \quad \text{for $r> R$.}
\]
We see that
\[
G_1(r) = r\lp \log r\rp^{\frac{1}{2}+\eps} \quad\text{and}\quad
G_2(r) = G_1(r) \int^r \frac{1}{s^2\lp\log s \rp^{1+2\eps}} \, ds 
\]
are a basis for the space of solutions to the Jacobi equation over $r\in(R,\infty)$. As before, $ \int^r 1/(s^2\lp\log s \rp^{1+2\eps}) \, ds$ is increasing and bounded, so let $\alpha\in(0,\infty)$ be its limit. Then
\[
\hat{G}(r) \sim (c_1 +c_2\alpha) G_1(r) \quad\text{as $r\rightarrow\infty$.}
\]
We know that $\hat{G}(r)$ is positive and increasing for positive $r$, so if we let $c_3=c_1 +c_2\alpha$, then $c_3>0$. Explicit computation analogous to the above gives
\[
\hat{G}^{\prime}(r) \sim c_3 \lb \lp\log r\rp^{\frac{1}{2}+\eps} + \lp\frac{1}{2}+\eps\rp\frac{1}{\lp\log r\rp^{\frac{1}{2}-\eps}} \rb \quad\text{as $r\rightarrow \infty$}.
\]
Dividing this by $\hat{G}$ (and considering the large $r$ behavior) shows that for some $B>R$, we have
\[
v_t > \frac{\frac{3}{4}\lp n-1+\sin^2\vphi\rp}{2r_t}\lp 1+\frac{\frac{1}{2}+\eps}{\log r} \rp
\quad\text{for $r_t\geq B$}.
\]
(The $3/4$ could be replaced by any positive real less than 1 by changing $B$, but it's less hassle for us to just pick an explicit coefficient here.)

In a complementary direction, assume that for some $R>1$, we have
\[
K(r,\theta,\Sigma) \geq -\frac{1/2}{r^2\log r} \quad\text{when $r>R$, and for all $\theta$ and $\Sigma\ni\partial_r$.}
\]
Then we can take $\check{K}(r)$ to be equal to 
\[
-\frac{1/2}{r^2\log r} \lb 1+ \frac{1}{\ltwo r}-\frac{1}{2\log r}-\frac{1}{2\log r \ltwo r } \rb
\quad \text{for $r> A$,}
\]
for some $A>R$.

We see that
\[
G_1(r) = r\lp \log r \ltwo r \rp^{\frac{1}{2}} 
\quad\text{and}\quad
G_2(r) =  G_1(r)  \int^r \frac{1}{s^2 \log s \ltwo s } \, ds
\]
are a basis for the space of solutions to the Jacobi equation over $r\in(A,\infty)$. Just as above, there is $c_3>0$ such that
\[
\check{G}(r)= c_1G_1(r)+c_2G_2(r) \sim c_3 G_1(r) \quad\text{as $r\rightarrow\infty$.}
\]
Another explicit computation gives
\[
\check{G}^{\prime}(r) \sim c_3 \lp \log r \rp^{\frac{1}{2}} \lp \ltwo r \rp^{\frac{1}{2}}
\lb 1+ \frac{1/2}{\log r} +\frac{1/2}{\log r \ltwo r } \rb
\quad\text{as $r\rightarrow \infty$}.
\] 
Dividing this by $\check{G}$ (and considering the large $r$ behavior) shows that for some $B>A$, we have
\[
v_t \leq  \frac{5}{4}  \frac{n-1+\sin^2\vphi_t}{2r_t}\lb 1+\frac{1/2}{\log r_t}+\frac{1/2}{\log r_t \ltwo r_t } \rb
\quad\text{for $r_t\geq B$.}
\]
(Again, the $5/4$ is chosen for convenience; any positive real greater than 1 could be used by changing $B$.)

Finally, suppose that for some $\eps>0$ and some $R>0$, we have
\[
K(r,\theta,\Sigma) \leq -\frac{2+\eps}{r^2} \quad\text{when $r>R$, and for all $\theta$ and $\Sigma\ni\partial_r$.}
\]
Then for $\delta>0$ such that $2+\eps=(2+\delta)(1+\delta)$, we can take $\hat{K}(r)$ to be non-positive and equal to
\[
-\frac{(2+\delta)(1+\delta)}{r^2} \quad\text{for $r>R$.}
\]
We find that
\[
G_1(r) = r^{2+\delta} 
\quad\text{and}\quad
G_2(r) = \int^r \frac{1}{s^{4+2\delta}}  \, ds G_1(r) 
\]
are a basis for the space of solutions to the Jacobi equation over $r\in(R,\infty)$. As before, we see that there is $c_3>0$ such that $\hat{G}(r)\sim c_3G_1(r)$, and we compute
\[
\hat{G}^{\prime}(r)\sim  c_3 (2+\delta)r^{1+\delta} .
\]
It follows that for some $B>R$ and $c>0$, we have
\[
v_t > c\frac{n-1+\sin^2\phi}{r_t} \quad\text{for $r_t>B$.}
\]

\subsection{Basic properties}

We now give a lemma showing that rank-$n$ martingales share several basic properties of Brownian motion.
\begin{Lemma}\label{BasicLemma}
Let $M$ be a Cartan-Hadamard manifold of dimension $m$, and let $X_t$ be a rank $n$ martingale on $M$ (for $2\leq n < m$), started from any initial point $X_0$. For any $p\in M$, let $(r,\theta)$ be polar coordinates around $p$. Then
\begin{enumerate}
\item $\limsup_{t\rightarrow \zeta} r_t =\infty$, almost surely, and
\item $r_t>0$ for all $t\in(0,\zeta)$, almost surely.
\end{enumerate}
\end{Lemma}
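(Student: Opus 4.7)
The plan is to derive both statements from Ito's rule applied to $r_t^2$ (which is smooth on all of $M$ because $M$ is Cartan-Hadamard) and to $\log r_t$ (smooth away from $p$), using only the Cartan-Hadamard comparison ($\hat{K}\equiv 0$) together with optional stopping.

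The fundamental estimate is a drift bound for $r_t^2$. A standard calculation gives $\Hess(r^2)(X,X)=2\ip{X}{\partial_r}^2+2r\Hess(r)(X,X)$, and the Cartan-Hadamard Hessian comparison yields $\Hess(r)(X,X)\geq |X_\perp|^2/r$ for every $X$, where $X_\perp$ is the component of $X$ perpendicular to $\partial_r$. Combining these gives $\Hess(r^2)(X,X)\geq 2|X|^2$ everywhere on $M$ (including at $p$), so by Ito the drift of $r_t^2$ is $\tfrac{1}{2}\sum_{i=1}^n\Hess(r^2)(v_{i,t},v_{i,t})\geq n$ at every time, uniformly in $X_t$ and $\vphi_t$ and regardless of whether $X_0=p$.

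For (1), fix $R>r_0$ and let $T_R=\inf\lc t:r_t\geq R\rc$. Up to $T_R$ the process is confined to $\ol{B_R(p)}$, so the martingale part of $r_t^2$ is a genuine $L^2$-martingale on $[0,T_R]$ and $r_{t\wedge T_R}^2-n(t\wedge T_R)$ is a submartingale bounded above by $R^2$. Taking expectations and letting $t\to\infty$ gives $n\,\E\lb T_R\rb\leq R^2-r_0^2$, so $T_R<\zeta$ almost surely; since $R$ is arbitrary and paths are continuous, this yields $\limsup_{t\to\zeta}r_t=\infty$. For (2) with $r_0>0$, Ito on $\log r_t$ combined with $v_t\geq (n-1+\sin^2\vphi_t)/(2r_t)$ gives drift at least $(n-2+2\sin^2\vphi_t)/(2r_t^2)\geq 0$ for $n\geq 2$, so $\log r_t$ is a local submartingale on $\lc r_t>0\rc$. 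For $0<\rho<r_0<R$ the exit time $\tau=\inf\lc t:r_t\in\lc\rho,R\rc\rc\leq T_R$ is almost surely finite, and the bounded submartingale $\log r_{t\wedge\tau}\in[\log\rho,\log R]$ yields via optional stopping $\log r_0\leq\log\rho\cdot\Prob(r_\tau=\rho)+\log R\cdot\Prob(r_\tau=R)$, so $\Prob(r_\tau=\rho)\leq\log(R/r_0)/\log(R/\rho)\to 0$ as $\rho\da 0$. Letting $R\to\infty$ (which by (1) exhausts $[0,\zeta)$) gives (2) in this case.

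Finally, when $X_0=p$ I use a regeneration argument: the same optional-stopping argument applied to $\sigma_\eps=\inf\lc t:r_t=\eps\rc$ gives $\E\lb\sigma_\eps\rb\leq\eps^2/n$, so $\sigma_\eps\to 0$ in probability and along a subsequence $\eps_k\da 0$ almost surely. The time-shifted process starting at $\sigma_{\eps_k}$ is again a rank-$n$ martingale, now starting at distance $\eps_k>0$ from $p$, so the previous case gives $r_t>0$ for all $t>\sigma_{\eps_k}$ almost surely; since $\sigma_{\eps_k}\to 0$ a.s.\ along the subsequence, $r_t>0$ for all $t>0$ almost surely. The most delicate point of this plan is this last regeneration step, which requires that the time-shifted process still satisfies \eqref{Eqn:RankN} relative to the shifted filtration; this is built into the local, SDE-based definition and is a matter of bookkeeping rather than a substantive obstacle.
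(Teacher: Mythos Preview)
Your proposal is correct and follows essentially the same approach as the paper: It\^o on $r^2$ with the Cartan--Hadamard drift bound $\geq n$ for part~(1), and the local submartingale property of $\log r$ together with optional stopping on an annulus for part~(2). The only differences are cosmetic: the paper couples the inner and outer radii as $((1/k)^k,k)$ and takes a single limit $k\to\infty$ where you send $\rho\downarrow 0$ then $R\to\infty$ separately, and it dispatches the $X_0=p$ case with a one-line appeal to the SDE in normal coordinates rather than your explicit regeneration argument.
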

\begin{proof}
Note that the sectional curvatures of $M$ are all non-positive, and thus we have that
\[
dr_t = \cos\vphi_t \,d W_t + v_t \, dt
\]
where, as just discussed, $v_t=\sum_{i=1}^n I(J_i)$ is time-continuous (and adapted, of course) and satisfies 
\begin{equation}\label{Eqn:v_t}
v_t\geq \frac{n-1+\sin^2\vphi_t}{2r_t} .
\end{equation}
Let $\sigma_x$ be the first hitting time of $\{r=x\}$, for $x\geq 0$.
Ito's rule gives that
\[
d\lp r^2 \rp_t = 2 r_t \cos\vphi_t \,d W_t  +\lp 2 r_t v_t +\cos^2\vphi_t\rp \, dt .
\]

Equation \eqref{Eqn:v_t} implies $2 r_t v_t +\cos^2\vphi_t \geq n$, and thus, for $C>r_0$,
\[
\E\lb r^2_{\sigma_C\wedge t} \rb \geq r^2_0 + n\E\lb \sigma_C \wedge t \rb .
\]
Using that $\E\lb r^2_{\sigma_C\wedge t} \rb\leq C^2$, dominated convergence lets us take $t\rightarrow \zeta$, and since $\sigma_C\leq \zeta$ we see that
\[
\E\lb \sigma_C \rb \leq \frac{1}{n}\lp C^2-r^2_0 \rp .
\]
In particular, $\Prob\lp \sigma_C <\infty\rp =1$. Because $\sigma_C=\zeta$ can only happen if both are infinite by path continuity, we also have $\Prob\lp \sigma_C <\zeta\rp =1$. Since this holds for all $C>r_0$, and since $r_t$ has continuous paths, we conclude that $\limsup_{t\rightarrow \zeta} r_t =\infty$, almost surely.

The proof of the second part mimics that of Proposition 3.22 of \cite{KS}. It is immediate from Equation \eqref{Eqn:RankN} (say, by using normal coordinates around $p$) that if $X_0=p$, the process immediately leaves $p$, almost surely, which is equivalent to $r_t$ immediately becoming positive, almost surely. Thus it is enough to prove the result under the assumption that $r_0>0$, and we now assume this.

From the first part and the definition of explosion, we see that
\begin{equation}\label{Eqn:Lem1First}
\Prob\lp \text{$\sigma_k<\zeta$ for all integers $k>r_0$, and $\lim_{k\rightarrow\infty}\sigma_k =\zeta$}\rp =1.
\end{equation}
Ito's rule shows that
\begin{equation}\label{Eqn:LogDecomp}
d\lp\log r\rp_{t} = \frac{1}{r_t}\cos\vphi_t \, dW_t +\lp \frac{1}{r_t}v_t - \frac{1}{2r^2}\cos^2\vphi_t\rp \, dt ,
\end{equation}
at least when $r>0$, which is all we will need.
Equation \eqref{Eqn:v_t} implies that the coefficient of $dt$ is greater than or equal to
\[
\frac{n-1+\sin^2\vphi_t -\cos^2\vphi_t}{2r^2_t} ,
\]
and $n\geq 2$ means that this is always non-negative. So $\log r_t$ is a (local) sub-martingale.

Thus, if $k$ is an integer such that $(1/k)^k<r_0<k $ (which will be true for all sufficiently large $k$), by the first part of the lemma, we know that $\Prob\lp \sigma_{(1/k)^k}\wedge\sigma_k<\infty \rp=1$. So dominated convergence and the fact that $\log r_t$ is a (local) sub-martingale give
\[\begin{split}
\log r_0 &\leq \E\lb\log r_{\sigma_{(1/k)^k}\wedge\sigma_k}\rb \\
& = -k\log k \Prob\lp \sigma_{(1/k)^k}
\leq \sigma_k \rp + \log k \lp 1- \Prob\lp \sigma_{(1/k)^k}\leq \sigma_k \rp \rp .
\end{split}\]
Algebra yields
\[
\Prob\lp \sigma_{(1/k)^k}\leq \sigma_k \rp \leq \frac{\log k-\log r_0}{(k+1)\log k} ,
\]
and letting $k\rightarrow\infty$ shows that
\begin{equation}\label{Eqn:Lem1Second}
\lim _{k\rightarrow\infty} \Prob\lp \sigma_{(1/k)^k}\leq \sigma_k \rp =0.
\end{equation}

Now $\sigma_0$ is the first hitting time of $\{r_t=0\}$, and we see that $\sigma_0\leq \sigma_{(1/k)^k}$ for all $k$. Then Equations \eqref{Eqn:Lem1First} and \eqref{Eqn:Lem1Second} imply that
\[
\Prob \lp \sigma_0<\zeta \rp = \lim_{k\rightarrow\infty} \Prob \lp \sigma_0<\sigma_k \rp \leq 
\lim _{k\rightarrow\infty} \Prob\lp \sigma_{(1/k)^k}\leq \sigma_k \rp =0.
\]
This is equivalent to the desired result, namely that $r_t>0$ for all $t\in(0,\zeta)$, almost surely.
\end{proof}

The first part of the lemma says that a rank-$n$ martingale almost surely leaves every compact set. We will routinely use this, in much the way that we did in the second part of the proof where it implied that $\log r_t$ left any interval of the form $((1/k)^k,k)$ (prior to $\zeta$). Since $p$ was arbitrary, the second part means that, like Brownian motion, a rank-$n$ martingale does not charge points. It also justifies our assertion from the introduction that, since are interested in the long-time behavior of our rank-$n$ martingales, we need not worry about the singularity of our polar coordinates at $p$, because the process will avoid $p$ at all positive times almost surely.

We also note that, in light of the above lemma and its proof, our earlier comment about allowing our rank-$n$ martingales to be stopped prior to (possible) explosion becomes clearer. In this case, if $\eta$ is such a stopping time, then (for example) the first part of the lemma becomes the statement that $\limsup_{t\rightarrow \zeta} r_t =\infty$ almost surely on the set of paths with $\eta=\infty$. The proof is a straightforward modification of the above.

\section{Transience of rank-$n$ martingales}

Our goal here is to determine conditions for $X_t$ to be transient,  that is, conditions such that $\lim_{t\rightarrow \zeta} r(X_t)=\infty$ almost surely.

\begin{THM}\label{THM:Trans}
Let $M$ be a Cartan-Hadamard manifold of dimension $m$, and let $X_t$ be a rank-$n$ martingale on $M$ (for $2\leq n < m$). Then if either of the following two conditions hold:
\begin{enumerate}
\item $n=2$ and, in polar coordinates around some point, $M$ satisfies the curvature estimate
\[
K(r,\theta,\Sigma) \leq -\frac{1+2\eps}{r^2\log r} \quad \text{for $r>R$, and for all $\theta$ and $\Sigma\ni\partial_r$,}
\]
for some $\eps>0$ and $R>1$, or
\item $n\geq 3$, 
\end{enumerate}
we have that $X_t$ is transient.
\end{THM}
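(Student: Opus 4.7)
The strategy is to construct, in each case, a positive, strictly decreasing, smooth function $F:[A,\infty)\to(0,\infty)$ with $F(r)\to 0$ as $r\to\infty$, such that $F(r_t)$ has non-positive drift (and hence is a supermartingale) whenever $r_t\geq A$. Given such an $F$, transience is extracted by a standard supermartingale-convergence argument combined with Lemma \ref{BasicLemma}. The choice of $F$ is modeled on the harmonic radial function in the radially symmetric comparison geometry.

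For case (2) ($n\geq 3$), I would take $F(r)=r^{-(n-2)}$ and use only the baseline Cartan-Hadamard bound $v_t\geq (n-1+\sin^2\vphi_t)/(2r_t)$ from the comparison subsection. Plugging into Equation \eqref{Eqn:BasicDecomp} and using $F'<0$ to absorb the one-sided inequality on $v_t$, the drift of $F(r_t)$ collapses algebraically to $-n(n-2)\sin^2\vphi_t/(2r_t^n)\leq 0$. Hence $F(r_t)$ is a supermartingale for all $r_t>0$, with no further curvature assumption. This directly generalizes the classical fact that $r^{-(n-2)}$ is harmonic for Euclidean Brownian motion in dimension $n\geq 3$.

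For case (1) ($n=2$ with the log-curvature bound), I would take $F(r)=(\log r)^{-\beta}$ for some fixed $\beta\in(0,\eps)$. From the estimate of Section \ref{Sect:N2}, in the form $v_t\geq (1+\sin^2\vphi_t)/(2r_t)\cdot\lb 1+(1+\eps')/\log r_t\rb$ for $r_t$ sufficiently large (where $\eps'$ is slightly less than $\eps$, which is what the ``$\sim$'' in that section delivers once $B$ is taken large), the drift of $F(r_t)$ becomes, up to a positive prefactor, an expression of the form $-2s+P(s,\beta,\eps')/\log r_t$, with $s=\sin^2\vphi_t$ and $P$ linear in $s$. Choosing $\beta<\eps'$ makes $P(s,\beta,\eps')\leq 0$ uniformly in $s\in[0,1]$, so the drift is $\leq 0$ for $r_t$ larger than some $B$. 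Taking $A=B$, $F(r_t)$ is then a supermartingale on $\{r_t\geq A\}$.

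With $F$ and $A$ in hand, the transience argument runs as follows. For any sequence $R_n\to\infty$, Lemma \ref{BasicLemma} gives $\sigma_{R_n}<\zeta$ a.s., where $\sigma_R=\inf\{t:r_t=R\}$. For $R_n>A$, let $\tau^{(n)}=\inf\{t>\sigma_{R_n}:r_t=A\}$. Optional stopping applied to the bounded supermartingale $F(r_{(t\vee\sigma_{R_n})\wedge\tau^{(n)}})$ yields $\Prob(\tau^{(n)}<\zeta)\leq F(R_n)/F(A)$. The events $\{\tau^{(n)}<\zeta\}$ are monotone decreasing in $n$, so their intersection has probability at most $\lim_n F(R_n)/F(A)=0$. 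Hence, a.s.\ there exists $n$ such that after $\sigma_{R_n}$ the process stays above $A$ forever; on this event, $F(r_t)$ is a bounded positive supermartingale on $[\sigma_{R_n},\zeta)$, converging a.s.\ to some limit $L$, which must be $0$ since $\limsup_{t\to\zeta} r_t=\infty$ by Lemma \ref{BasicLemma}. Therefore $r_t\to\infty$ a.s., which is transience.

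The main obstacle is the supermartingale construction in the $n=2$ case: unlike $n\geq 3$, where the drift of $r^{-(n-2)}$ falls out as a transparently non-positive expression using only the universal Cartan-Hadamard bound, for $n=2$ one must calibrate the exponent $\beta$ in $F(r)=(\log r)^{-\beta}$ to the curvature exponent $\eps$ and verify the drift bound uniformly across all angles $\vphi_t$, which is precisely where the strict positivity $\eps>0$ in the hypothesis enters (and is essentially the place where the result becomes sharp).
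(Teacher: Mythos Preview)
Your proposal is correct and follows essentially the same approach as the paper: construct a positive decreasing $F$ with $F(r_t)$ a local supermartingale, then extract transience via optional stopping together with Lemma~\ref{BasicLemma}. The only cosmetic differences are that the paper takes $F(r)=r^{-1}$ for all $n\geq 3$ (rather than your $r^{-(n-2)}$) and $F(r)=(\log r)^{-\eps}$ in the $n=2$ case (rather than your $(\log r)^{-\beta}$ with $\beta<\eps'$), and it phrases the endgame as ``$\liminf r_t>a$ for every $a$'' via two-barrier optional stopping rather than your supermartingale-convergence formulation; these are equivalent packagings of the same argument.
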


\begin{proof}
Note that the sectional curvatures of $M$ are all non-positive, and thus in either case we have that
\[
dr_t = \cos\vphi_t \,d W_t + v_t \, dt
\]
where $v_t=\sum_{i=1}^n I(J_i)$ is time-continuous (and adapted, of course) and satisfies 
\[
v_t\geq \frac{n-1+\sin^2\vphi_t}{2r_t} .
\]

For $n\geq 3$, this is enough. In particular, in this case we have that
\begin{equation}\label{Eqn:TransEstimate}
v_t\geq \frac{2+\sin^2\vphi_t}{2r_t} \geq \frac{1}{r_t}.
\end{equation}
(The intuitive point is that the drift is at least as large as for a 3-dimensional Bessel process, while the quadratic variation of the martingale part grows no faster than for a 3-dimensional Bessel process, and thus one expects $r_t$ to be ``at least as transient'' as a 3-dimensional Bessel process.) Ito's rule followed by an application of Inequality \eqref{Eqn:TransEstimate} and algebra gives
\[\begin{split}
& d\lp \frac{1}{r}\rp_t = -\frac{1}{r^2_t}\cos\vphi_t \, dW_t +\lp \frac{1}{r^3_t}\cos^2\vphi_t -\frac{v_t}{r^2_t}\rp \, dt \\
\text{where}\quad& \frac{1}{r^3_t}\cos^2\vphi_t -\frac{v_t}{r^2_t} \leq -\frac{\sin^2\vphi_t}{r^3_t}
\leq 0 .
\end{split}\]
In particular, $1/r_t$ is a (local) supermartingale, at least for $r>0$.

We know that $\sigma_k$ is finite for all integers $k>r_0$, almost surely. Choose any $a>0$. Then the event $\liminf_{t\rightarrow \zeta} r_t \leq a$ coincides, up to a set of probability zero, with the event that $r_t$ hits the level $a$ after $\sigma_k$ for all $k$ that are also larger than $a$. Choose such a $k$ and a $b>k$. Let $\tilde{\sigma}_a$ be the first hitting time of $a$ (for $r_t$) after $\sigma_k$, and similarly for $\tilde{\sigma}_b$. We know that $\tilde{\sigma}_b$ is almost surely finite. This, along with the fact that $1/r_t$ is a (local) supermartingale and our choice of stopping times implies
\[
\frac{1}{k} = \E\lb \frac{1}{r_{\sigma_k}}\rb \geq \E\lb \frac{1}{r_{\tilde{\sigma}_a\wedge\tilde{\sigma}_b}}\rb = \frac{1}{a} \Prob\lp \tilde{\sigma}_a<\tilde{\sigma}_b \rp +  \frac{1}{b}\lp1- \Prob\lp \tilde{\sigma}_a<\tilde{\sigma}_b \rp\rp .
\]
This, in turn, yields
\[
 \Prob\lp \tilde{\sigma}_a<\tilde{\sigma}_b \rp \leq \frac{\frac{1}{k}-\frac{1}{b}}{\frac{1}{a}-\frac{1}{b}} .
\]
If $\tilde{\sigma}_a$ is finite, it must be less than $\tilde{\sigma}_b$ for all sufficiently large $b$ (up to a set of probability zero), so letting $b\rightarrow \infty$ shows that
\[
\Prob\lp \tilde{\sigma}_a<\infty \rp \leq \frac{a}{k} .
\]
Because the right-hand side of the above goes to zero as $k\rightarrow \infty$, we see that the probability of $r_t$ returning to the level $a$ after every $\sigma_k$ (for sufficiently large $k$) is zero. It follows that $\liminf_{t\rightarrow \zeta} r_t > a$ almost surely. Since $a$ was arbitrary, we conclude that $\liminf_{t\rightarrow \zeta} r_t=\infty$ almost surely. Then $\lim_{t\rightarrow \zeta} r_t=\infty$ almost surely, which is equivalent to the transience of $X_t$.

The first part (the $n=2$ case) is similar, the difference being that we must replace $1/r$ with a more suitable function. Using the curvature bound and the results of Section \ref{Sect:N2}, we see that, for some $B>1$, we have
\[
v_t \geq \lp 1+\sin^2\vphi_t\rp \lp \frac{1}{2r_t} + \frac{1+\eps}{2r_t \log r_t} \rp
\quad \text{for $r_t>B$.}
\]
Now Ito's rule gives (for $r_t>1$)
\[\begin{split}
d\lp \frac{1}{\lp\log r\rp^{\eps}}\rp_t &= \frac{-\eps \cos\vphi_t}{r \lp\log r_t\rp^{(1+\eps)}} \, dW_t \\
& + \frac{\eps}{r \lp\log r_t\rp^{(1+\eps)}} \lb \frac{\cos^2\vphi_t}{2r_t}\lp \frac{1+\eps}{\log r_t}+1\rp -v_t \rb \, dt.
\end{split}\]
Combing this with the upper bound for $v_t$, we see that, for $r_t>B$, the coefficient of $dt$ (in other words, the infinitesimal drift) is less than or equal to
\[
\frac{\eps}{2r^2 \lp\log r_t\rp^{(1+\eps)}} \lp 1+ \frac{1+\eps}{\log r_t}\rp \lp \cos^2\vphi_t -\sin^2\vphi_t -1\rp \leq 0.
\]
It follows that $1/(\log r_t)^{\eps}$ is a (local) supermartingale for $r_t>A$.

If we now choose $k$, $a$, and $b$ (and the corresponding notation) as above, with the additional stipulation that $a>B$, similar logic gives
\[\begin{split}
& \frac{1}{\lp\log k\rp^{1+\eps}} = \E\lb\frac{1}{\lp\log r_{\sigma_k}\rp^{1+\eps}} \rb
\geq \E\lb\frac{1}{\lp\log r_{\tilde{\sigma}_a\wedge\tilde{\sigma}_b}\rp^{1+\eps}} \rb \\
& \quad = \frac{1}{\lp\log a\rp^{1+\eps}}  \Prob\lp \tilde{\sigma}_a<\tilde{\sigma}_b \rp +
\frac{1}{\lp\log b\rp^{1+\eps}} \lp 1- \Prob\lp \tilde{\sigma}_a<\tilde{\sigma}_b \rp\rp .
\end{split}\]
It follows that
\[
\Prob\lp \tilde{\sigma}_a<\tilde{\sigma}_b \rp \leq \frac{\frac{1}{\lp\log k\rp^{1+\eps}} -\frac{1}{\lp\log b\rp^{1+\eps}} }{\frac{1}{\lp\log a\rp^{1+\eps}}  -\frac{1}{\lp\log b\rp^{1+\eps}} } ,
\]
and letting $b\rightarrow \infty$ shows that $\Prob\lp \tilde{\sigma}_a<\infty \rp \leq (\log a/\log k)^{1+\eps}$. Since this last quantity goes to zero as $k\rightarrow \infty$, just as before we conclude that $\liminf_{t\rightarrow \zeta} r_t > a$ almost surely. Because this holds for any $a>B$, it follows that $\lim_{t\rightarrow \zeta} r_t=\infty$ almost surely, which is equivalent to the transience of $X_t$.\end{proof}

Note that the estimates used in the proof of the $n\geq 3$ case are not sharp. This indicates that some positive curvature could be allowed in this case. This, however, would take us outside the context of Cartan-Hadamard manifolds and thus require additional topological assumptions in order to have global polar coordinates, and for this reason we prefer to restrict our attention to Cartan-Hadamard manifolds.

In the $n=2$ case, however, the above is sharp, in the sense that the result does not hold for $\eps=0$. This follows from known results for transience and recurrence of Brownian motion on surfaces, once we observe that a radially symmetric surface can be realized as a totally geodesic submanifold of a radially symmetric 3-manifold, so that Brownian motion on a radially symmetric surface is included as a special case of the above (see also Section \ref{Sect:VanishingDrift}).

\section{Transience in geometric contexts}

We now establish the connection between rank-$n$ martingales and various geometric objects. 

\subsection{Minimal submanifolds}\label{Sect:MinSub}
First, let $N$ be an $n$-dimensional, properly immersed, minimal submanifold of $M$. Let $X_t$ be Brownian motion on $N$, viewed as a process in $M$ (under the immersion, of course). Then $X_t$ is a rank-$n$ martingale in $M$, as mentioned in the introduction.

To see this, let $\tilde{v}_{i,t}\subset TN$ be such that the solution to
\[
d\tilde{X}_t = \sum_{i=1}^n \tilde{v}_{i,t} dW^i_t
\]
is Brownian motion on $N$. (Locally, we can just let $\tilde{v}_{i,t} = \tilde{v}_i(\tilde{X}_t)$ for a smooth orthonormal frame $\tilde{v}_1,\ldots,\tilde{v}_n$. Alternatively, we can let the $\tilde{v}_{i,t} $ be given by the parallel transport of an orthonormal frame at $\tilde{X}_0$ along $\tilde{X}_t$, in the spirit of the Ells-Elworthy-Malliavin construction of Brownian motion on a manifold.) Now let $X_t$ and $v_{i,t}$ be the images of $\tilde{X}_t$ and $\tilde{v}_{i,t}$ under the immersion. Then in general, Ito's rule shows that they satisfy the SDE
\[
dX_t = \sum_{i=1}^n v_{i,t} dW^i_t -\frac{1}{2}H(X_t)\, dt ,
\]
where $H$ is the mean curvature vector of $N$ (as a submanifold of $M$). Here, we normalize $H$ so that, if $y_1,\ldots,y_m$ are normal coordinates for $M$ centered at a point $p$ in (the image of) $N$, then
\[
H(p) = -\sum_{i=1}^m \lp \Lap_N y_i|_N\rp \partial_{y_i} .
\]
Since we're assuming that $N$ is minimal (and, of course, the $v_{i,t}$ are orthonormal since the immersion is isometric), $H\equiv 0$, and we see that $X_t$ is a rank-$n$ martingale.

Properness of $N$ implies that $X_t$ explodes relative to the topology of $N$ if and only if it explodes relative to the topology of $M$. (In particular, $\zeta$ is independent of whether we view $X_t$ as a process on $N$ or on $M$.) Thus Theorem \ref{THM:Trans}, applied to this case, implies Corollary \ref{Cor:MinimalTrans}. Note that the assumption of properness can be dropped, via a natural application of allowing a rank-$n$ martingale to be stopped prior to explosion. If $N$ is not properly immersed, then Brownian motion on $N$ might explode (relative to the topology of $N$) without also exploding in $M$ (relative to the topology of $M$). Let $\eta$ be the explosion time relative to the topology of $N$. Clearly $\eta$ is a stopping time, and we consider $X_t$ run until $\eta$. On the set of paths where $\eta=\zeta$, Theorem \ref{THM:Trans} (or more precisely, a simple modification of its proof) shows that $X_t$ is transient (noting that it's still true that transience on $M$ implies transience on $N$). On the set of paths where $\eta<\zeta$ (which is the only other possibility), we have in particular that $\eta<\infty$. Thus $X_t$ is almost surely transient on this set as well, and the claim that the assumption of properness can be dropped follows.

\subsection{Mean curvature flow}
Next, we note that this idea generalizes to mean curvature flow. To describe this, for a manifold $N$, with a smooth structure, of dimension $n$ (for $2\leq n < m$), let
\[
f_{\tau}(y):(-\infty,0]\times N\rightarrow M
\]
be an ancient solution to mean curvature flow. (The term ``ancient'' refers to the fact that the solution is defined at all past times. Solutions defined for $\tau\in[0,\infty)$ are called immortal, and solutions defined for all $\tau$, eternal.) In particular, let $g_{\tau}$ be the (smoothly-varying) metric induced on $N$ at time $\tau$ by the immersion, let $\Lap_{\tau}$ be the associate Laplacian, and let $H_{\tau}$ be the associated mean curvature vector (with the same normalization as above, for any fixed time) for $f_{\tau}(N)$. Then $f_{\tau}$ is a smooth function, proper as a map from $N$ to $M$ at every fixed time $\tau$, satisfying the differential equation
\[
\partial_{\tau} f_{\tau}(y) = -\frac{1}{2} H_{\tau}(y) \quad\text{for all $\tau\in[0,\infty)$ and $y\in N$.}
\]
Note that the factor of $1/2$ in front of $H_{\tau}$ is non-standard. This is the same difficulty as encountered in normalizing the heat equation; the factor of $1/2$ is better suited to stochastic analysis, but analysts and geometers prefer not to include it. Rescaling time allows one to recover the standard normalization. Since our results are all for the asymptotic behavior of our process, though, rescaling time doesn't change them, and thus the unusual normalization causes no particular trouble.

Note that we don't require the solution to develop a singularity at some positive time (so that eternal solutions are a special case of ancient solutions). Further, we assume that the solution is smooth even at time 0 (which is the meaning of our assumption $\tau\in[0,\infty)$). In other words, if there is a singularity (such as collapse to a ``round point''), we don't start our rank-$n$ martingale from there. This is so that we can make our choice of $\Lambda_t$ continuous, in keeping with our desire to avoid technical difficulties here.

Now suppose that $\tilde{X}_t$ satisfies
\[
d\tilde{X}_t = \sum_{i=1}^n \tilde{v}_{i,t} dW^i_t ,
\]
where the $\tilde{v}_{i,t}$ are an orthonormal frame at $\tilde{X}_t$ with respect to the metric $g_{-t}$. We can always find such a process; for example, by letting the $\tilde{v}_{i,t}$ come from a local (in both space and time) smooth choice of time-varying orthonormal frame. We think of $\tilde{X}_t$ as being Brownian motion along the mean curvature flow, run backwards in time. Indeed, note that $\tilde{X}_t$ is the (inhomogeneous) diffusion associated to $\frac{1}{2}\Lap_{-t}$.

If we again let $X_t$ be the image of $\tilde{X}_t$ under $f_{-t}$, then $X_t$ is a rank-$n$ martingale (on $M$). To see this, let $v_{i,t}$ be the pushforward of $\tilde{v}_{i,t}$, and note that the $v_{i,t}$ are orthonormal since $f_{-t}$ is an isometric immersion for all $t$. Then Ito's formula gives
\[
dX_t = \sum_{i=1}^n v_{i,t} dW^i_t -\frac{1}{2}H_{-t}(X_t)\, dt - \left.\frac{\partial f}{\partial \tau}\right|_{\tau=-t}\lp \tilde{X}_t\rp \, dt .
\]
But using that $f_{\tau}$ is a solution to the mean curvature flow, this reduces to
\[
dX_t = \sum_{i=1}^n v_{i,t} dW^i_t
\]
as desired.  (The minimal submanifold situation above is just the special case of a constant solution to the mean curvature flow.) This also explains why we consider our ``inhomogeneous Brownian motion'' to be run backwards in time with respect to the flow. This phenomenon (namely, ``process time'' running in the opposite direction from ``PDE time'') is familiar, even arising in the standard approach to the heat equation on the real line via Brownian motion.

Of course, for a non-ancient solution to the mean curvature flow, the same procedure leads to a rank-$n$ martingale run for a finite time. However, since all of our results in the current paper concern the asymptotic behavior of rank-$n$ martingales, we don't consider this case.

A geometric interpretation of the transience of this process is less obvious than it was for minimal submanifolds. Certainly, though, it contains some information about the flow, such as the (fairly basic) fact that an ancient solution cannot be contained in a compact subset of $M$ (for all time). For a refinement of this idea, we mention the following. In \cite{MyJGA}, rank-2 martingales in $\bR^3=\lc(x_1, x_2, x_3):x_i\in\bR\rc$ were studied, with an eye toward classical minimal surfaces. Let $r=\sqrt{x_1^2+x_2^2}$. It was proved that for any $c>0$, there is a positive integer $L$ such that any rank-2 martingale (as considered in the present paper) exits the region
\[
A=\lc r>e^L\text{ and } |x_3|<\frac{cr}{\sqrt{\log r \ltwo r}} \rc
\]
in finite time, almost surely (this is a restatement of Theorem 2 of \cite{MyJGA}). Thus, we see that an ancient solution to mean curvature flow, for surfaces in $\bR^3$, cannot be contained in $A$ for all $\tau\in(-\infty,0]$.

Finally, we (again) mention that the relationship between rank-$n$ martingales and mean curvature flow can be used to represent mean curvature flow in terms of a type of stochastic target problem. This is done, in the case when the ambient space is $\bR^n$, in \cite{SonerTouzi}.

\subsection{Sub-Riemannian geometry}\label{Sect:SubR}

Our final example of rank-$n$ martingales arising in geometry is as follows. Again, starting with a Cartan-Hadamard manifold $M$ (of dimension $m\geq 3$), choose a smooth rank-$n$ distribution satisfying the bracket-generating property. That is, let $\sD$ be a smooth map which assigns an $n$-dimensional subspace $\sD_y$ of $T_yM$ to each $y\in M$. Further, if
\[
\sD_y^k = \spn\lc [w_1,[\ldots[w_{k-1},w_k]]]_y : w_i(z)\in \sD_z \text{ }\forall z\in M \text{ and $w_i$ is smooth}\rc ,
\]
we assume that for each $y\in M$, there exists an integer $k(y)\geq 2$ such that $T_yM=\sD_y^{k(y)}$ (this is the bracket-generating property). Each subspace $\sD_y$ can be given a Riemannian metric by restricting the metric on $M$ to $\sD_y$. This gives a sub-Riemannian structure on $M$. (See \cite{Montgomery} for background on sub-Riemannian geometry.)

Further, suppose we choose a smooth volume form on $M$ (in general, there is no canonical volume associated to a sub-Riemannian structure, although intrinsic choices, such as the Popp volume, can  be considered). Then let $\Lap_{s}$ be the associated sub-Laplacian (defined as the divergence of the horizontal gradient). In general, if $v_1,\ldots v_n$ is a local orthonormal frame for $\sD$, then the sub-Laplacian will be locally given by $\sum v_i^2$ plus a first-order term.

We are interested in the case when the sub-Laplacian turns out to be a rank-$n$ martingale with respect to the ambient Riemannian metric. (In our earlier notation, we will then have $\Lambda_t = \sD_{X_t}$.) This will happen if, at any point, we can find normal coordinates such that the sub-Laplacian can be written as a sum of squares of coordinate vector fields at that point. (This is an Ito-type sum of squares condition, as opposed to the more common Stratonovich-type sum of squares condition, in which one assumes that the the sub-Laplacian is a sum of squares of smooth vector fields, such as the $v_i$ as above, on an open set.) While this will certainly not be true in general, one can consider Carnot groups or unimodular Lie groups (as discussed in \cite{Ugo}). 

At any rate, if the diffusion associated to $\frac{1}{2}\Lap_s$ is a rank-$n$ martingale (on $M$ with the original Riemannian metric), then by assumption our results apply in this situation.

\begin{Cor}
Let $M$ be a Cartan-Hadamard manifold of dimension $m\geq 3$. With $2\leq n < m$, let the rank-$n$ distribution $\sD$ (as above) with the restriction metric be a sub-Riemannian structure on $M$, and suppose this sub-Riemannian structure is given a volume form such that the associated sub-Laplacian $\Lap_s$ gives rise to a rank-$n$ martingale. Then if either of the following two conditions hold:
\begin{enumerate}
\item $n=2$ and, in polar coordinates around some point, $M$ satisfies the curvature estimate
\[
K(r,\theta,\Sigma) \leq -\frac{1+2\eps}{r^2\log r} \quad \text{for $r>R$, and for all $\theta$ and $\Sigma\ni\partial_r$,}
\]
for some $\eps>0$ and $R>1$, or
\item $n\geq 3$, 
\end{enumerate}
we have that the diffusion associated to $\frac{1}{2}\Lap_s$ (from any initial point) is transient.
\end{Cor}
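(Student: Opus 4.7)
The plan is to reduce the statement immediately to Theorem \ref{THM:Trans}; most of the work has been done in the discussion preceding the corollary, and the only thing to check is that the associated diffusion really does fit into the rank-$n$ martingale framework. First I would observe that by the sum-of-squares hypothesis, for any local orthonormal frame $v_1,\ldots,v_n$ for $\sD$ (orthonormal in the restriction of the ambient metric, which coincides with the sub-Riemannian metric on $\sD$), the sub-Laplacian takes the form $\Lap_s=\sum_{i=1}^n v_i^2$ with no first-order correction term. Consequently the diffusion associated to $\frac{1}{2}\Lap_s$ locally satisfies the It\^o SDE
\[
dX_t = \sum_{i=1}^n v_i(X_t)\,dW^i_t,
\]
and since $v_1(X_t),\ldots,v_n(X_t)$ form an orthonormal $n$-tuple in $T_{X_t}M$ under the ambient metric, this is exactly the defining Equation \eqref{Eqn:RankN} for a rank-$n$ martingale, with $\Lambda_t=\sD_{X_t}$.

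Second, I would address the fact that a single frame $v_1,\ldots,v_n$ is only defined locally; this causes no trouble. The invariance of the SDE under adapted $O(n)$-valued changes of frame discussed in Section 2.1 shows that the rank-$n$ martingale structure is well-defined globally up to the explosion time $\zeta$, independently of the particular local choices. Existence and uniqueness of the diffusion up to $\zeta$ follow from standard H\"ormander theory via the bracket-generating hypothesis on $\sD$. Since Theorem \ref{THM:Trans} applies to rank-$n$ martingales from any initial point and allows arbitrary explosion time (with transience meaning $\lim_{t\to\zeta}r(X_t)=\infty$), this is enough.

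Finally, I would apply Theorem \ref{THM:Trans} directly: the curvature hypotheses (1) and (2) in the corollary are identical word-for-word to those in the theorem, so in either case we conclude $\lim_{t\to\zeta}r(X_t)=\infty$ almost surely, which is the claimed transience. The only step with any content is the first one, namely recognizing that the sum-of-squares form of $\Lap_s$ produces a rank-$n$ martingale in the paper's precise sense; after that the argument is purely invocational, and I anticipate no genuine obstacle.
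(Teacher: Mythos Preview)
Your proposal is correct and follows exactly the paper's approach: the discussion immediately preceding the corollary observes that the sum-of-squares hypothesis makes the $\frac{1}{2}\Lap_s$-diffusion a rank-$n$ martingale with $\Lambda_t=\sD_{X_t}$, after which the corollary is a direct invocation of Theorem~\ref{THM:Trans}. One minor remark: existence and uniqueness of the diffusion come from standard SDE theory for smooth coefficients rather than from H\"ormander's theorem (which gives hypoellipticity), but this does not affect the argument.
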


\section{The angular component}

In the final two sections, we consider the asymptotic behavior of $\theta_t=\theta(X_t)$, and in particular, its relationship to the existence of non-constant bounded harmonic functions on minimal submanifolds and sub-Riemannian structures of the above type.

Because the asymptotic behavior of $\theta_t$ is only interesting in the case when $X_t$ is transient, in the remainder of the paper we assume that $X_t$ is transient.

\subsection{The negatively pinched case}

As already mentioned, angular convergence results for martingales of bounded dilation (which include rank-$n$ martingales with $n\geq 2$ as a special case) have been given in the context of the stochastic approach to harmonic maps. In particular, Theorem 2.3.5 of \cite{KendallSurvey}), applied to our context, gives the following.

\begin{THM}\label{THM:Kendall}
If $M$ is a Cartan-Hadamard manifold of dimension $m\geq 3$ and with sectional curvatures pinched between two negative constants, and $X_t$ is a rank-$n$ martingale on $M$ with $2\leq n<m$, then $\theta_t=\theta(X_t)$ converges, almost surely, as $t\rightarrow\zeta$. Further, the distribution of $\theta(\zeta)$ on $\mS^{m-1}$ is ``genuinely random.''
\end{THM}

Let $X_t$ be either Brownian motion on a minimal submanifold or the natural diffusion on a sub-Riemannian structure. Suppose that $E$ is an event in the invariant (or tail) $\sigma$-algebra. Then it is well known that if, for any $x$ on the minimal submanifold or the sub-Riemannian manifold, we let
\[
h(x)= \Prob\lp E\rp \quad\text{given that $X_0=x$,}
\]
then $h$ is a harmonic function, on either the minimal submanifold or the sub-Riemannian manifold (with respect to the sub-La\-plac\-ian), which is clearly bounded. If $E$ is non-trivial, then $h$ will be non-constant. The previous theorem implies that, under these hypotheses, there will be a non-trivial invariant event of the form $\theta_{\zeta}\in U\subset \mS^{m-1}$ for some $U$. If we apply this to the case of sub-Riemannian structures, we immediately get the following.

\begin{Cor}\label{Cor:sRHarmKendall}
Suppose that $M$ is Cartan-Hadamard manifold of dimension $m\geq 3$, and that $M$ has all sectional curvatures bounded above and below by some negative constants. Consider a rank-$n$ sub-Riemannian structure on $M$, with the restriction metric and a volume form such that $\Lap_s$ gives rise to a rank-$n$ martingale. Then we have that $M$ admits a non-constant, bounded, $\Lap_s$-harmonic function.
\end{Cor}

Similarly, applying this result to minimal submanifolds gives Corollary \ref{Cor:UsingKendall}.

\subsection{Convergence of the martingale part}

While the case of negatively pinched curvature is quite natural, one would expect that the upper curvature bound could be improved to allow some decay. Indeed, Goldberg and Mueller \cite{GoldbergMueller} indicate that allowing sub-quadratic curvature decay is possible. In order explore how much further one can go, we start by analyzing the martingale part of the angular process $\theta_t$, in the situation when $n\geq 3$.

We can write the metric on $M$ in polar coordinates around some pole $p$ as follows. Choose a point $\hat{\theta}\in \mS^{m-1}$ and let $U$ be the open hemisphere around $\hat{\theta}$. Then the metric on polar coordinates on $(0,\infty)\times U$ is given by
\[
dr^2 + \sum_{i,j=1}^{m-1} g_{i,j}(r,\theta_1,\ldots,\theta_{m-1}) \, d\theta_i \otimes d\theta_j
\]
where $(\theta_1,\ldots,\theta_{m-1})$ are normal coordinates on $U$ around $\hat{\theta}$ (with respect to the usual metric on $\mS^{m-1}$), and the $g_{i,j}$ are smooth, positive functions on $(0,\infty)\times U$. Of course, this expression is most significant at points of the form $(r,\hat{\theta})=(r,0,\ldots,0)$, since that is where the spherical normal coordinates are centered. At such a point, the matrix $[g_{i,j}]$ gives the inner products of the natural Jacobi fields. To be more precise, let $\gamma$ be the ray (from $p$) through $(r,\hat{\theta})$ and let $j_i(r)$ be the Jacobi field along $\gamma$ determined by the initial conditions $j_i(0)=0$ and $j_i^{\prime}(0)=\partial_{\theta_i}$ (and note that $\{\partial_{\theta_1},\ldots, \partial_{\theta_{m-1}}\}$ gives an orthonormal basis for the subspace of $T_pM$ perpendicular to $\gamma$). Then $\ip{j_i(r)}{j_j(r)}=g_{i,j}(r,0,\ldots,0)$. Thus the Rauch comparison theorem shows that we can bound the the square roots of the eigenvalues of $[g_{i,j}]$ from below by a function $\hat{G}$, and from above by a function $\check{G}$, as in Section \ref{Sect:Compare}.

Now we let $\tilde{w}_{i,t}$ be the component of $v_{i,t}$ orthogonal to $\partial_r$ (that is, the component in the tangent space to $\{r=r_t\}$). Further, suppose, as before, that we rotate our vector fields at some instant so that $v_{2,t}, \ldots, v_{n,t}$ are perpendicular to $\partial_r$, and we let $\vphi_t$ be the angle between $v_{1,t}$ and $\partial_r$, so that $\ip{v_{1,t}}{\partial_r}=\cos\vphi_t$. Then, at this instant, $\tilde{w}_{2,t}, \ldots, \tilde{w}_{n,t}$ all have length 1, while $\tilde{w}_{1,t}$ has length $\sin\vphi_t$. Next, we let $w_{i,t}$ be the image of $\tilde{w}_{i,t}$ on $\mS^{m-1}$ with the usual (round) metric given by the identification of $\mS^{m-1}$ with $\{r=r_t\}$ coming from the polar coordinates. The inner products of the $w_{i,t}$ can be computed from $[g_{i,j}(r_t,\theta_t)]^{-1}$, but we don't need the precise details. The $w_{i,t}$ won't, in general, be orthogonal, but they will be independent (except possibly for $w_{1,t}$ if $\sin\vphi=0$). More to the point, for $\hat{G}$ or $\check{G}$ as above, we see that
\begin{equation}\label{Eqn:4Estimates}
\begin{split}
|w_{1,t}|\leq \frac{|\sin\vphi_t|}{\hat{G}(r_t)}\quad\text{and}\quad 
|w_{i,t}|\leq \frac{1}{\hat{G}(r_t)}\quad\text{for $i\in\{2,\ldots,n\}$} , \\
\text{or}\quad
|w_{1,t}|\geq \frac{|\sin\vphi_t|}{\check{G}(r_t)}\quad\text{and}\quad 
|w_{i,t}|\geq \frac{1}{\check{G}(r_t)}\quad\text{for $i\in\{2,\ldots,n\}$} .
\end{split}
\end{equation}

Next, Ito's rule show the process $\theta_t$ satisfies the SDE
\begin{equation}\label{Eqn:ThetaProcess}
d\theta_t = \sum_{i=1}^n w_{i,t} \, dW^i_t + \lc \text{some vector-valued process} \rc\, dt .
\end{equation}
In other words, we've identified the martingale part of $\theta_t$. In general, the drift term will depend on the derivative of the metric, and we won't have pointwise control of it. (Although see \cite{EltonArticle} for an approach to angular convergence of Brownian motion on a Cartan-Hadamard manifold.) In this section, we focus on just the martingale part.

We're interested in determining conditions under which ``the martingale part converges.'' We put this in quotes because $\theta_t$ takes values in $\mS^{m-1}$, not a vector space, and thus there is no decomposition of $\theta_t$ as a (local) martingale plus a process of (locally) bounded variation (rather, the Ito SDE above gives a ``differential'' version of such a decomposition). Nonetheless, the quadratic variation, which is given by the increasing process 
\begin{equation}\label{Eqn:QVDef}
\QV{\theta}_{t\wedge\zeta} = \int_0^{t\wedge\zeta} \sum_{i=1}^n |w_{i,s}|^2\, ds ,
\end{equation}
remains the object of interest. If we consider the anti-development of $\theta_t$, whether the martingale part converges or not is given by whether the total quadratic variation (that is, $\QV{\theta}_{\zeta}\in (0,\infty]$) is finite. We won't make use of the anti-development here (though see Section 2.3 of \cite{HsuBook}) for details, so we don't explore this further. For our purposes, it's more direct to take the following point of view. Let $y_1,\ldots, y_m$ be standard Euclidean coordinates on $\bR^m$, and thus also functions on $\mS^{m-1}$ via the standard embedding. Then all of the $y_j$ have gradient bounded by 1 on $\mS^{m-1}$. It follows that if $\QV{\theta}_{\zeta}$ is finite, then each $y_j(\theta_t)$ has finite quadratic variation and hence convergent martingale part, and further, each $y_j(\theta_t)$ has quadratic variation bounded above by $\QV{\theta}_{\zeta}$. In the other direction, there is a positive constant $\kappa(m-1)$, depending only on the dimension, such that for any unit vector $v$ at any point of $\mS^{m-1}$, there is a $j\in\{1,\ldots,m\}$ such that $|\ip{v}{\nabla y_j}|\geq \kappa(m-1)$ (where the gradient and inner product are taken on $\mS^{m-1}$ with the standard metric, of course). It follows from this (and the pigeonhole principle) that if $\QV{\theta}_{\zeta}$ is infinite, so is the quadratic variation of at least one of the $y_{j,t}$, and this $y_{j,t}$ thus has non-convergent martingale part. (That a martingale on a Riemannian manifold converges if and only if its quadratic variation is finite is true in general, see \cite{DarlingQuadVar} and \cite{Zheng}.)

Before proving our next theorem, we make one technical observation. Suppose we show that, for any $\beta>0$, there exists $\tilde{B}> 0$ and $\rho>\tilde{B}$ such that
\[
\E\lb  \QV{\theta}_{\sigma_{\tilde{B}}\wedge\zeta} \rb <\beta
\]
whenever $r_0>\rho$. Then this is enough to prove that $\theta_t$ has finite quadratic variation almost surely, in the sense just described. Further, for any $\delta>0$, we can find $\rho$ (perhaps larger than before), such that
\[
\Prob\lp \QV{\theta}_{\zeta} >\delta \rp >1-\delta ,
\]
whenever $r_0>\rho$.
To see this, recall that we're assuming that $X_t$ is transient, and thus for any $\tilde{B}$, we can make $\Prob\lp \sigma_{\tilde{B}}<\infty\rp$ as close to zero as we wish by making $r_0$ large. Since our previous observations apply (up to a set of probability zero) on the set $\{\sigma_{\tilde{B}}=\infty\}$, we see that $\QV{\theta}_{\zeta}$ is almost surely finite on $\{\sigma_{\tilde{B}}=\infty\}$. Further, for any $\delta>0$, we can choose $\rho$ so that
\[
\Prob\lp \sigma_{\tilde{B}}=\infty\text{ and } \QV{\theta}_{\sigma_{\tilde{B}}\wedge\zeta}> \delta \rp >1-\delta
\]
whenever $r_0>\rho$ (here we've used Markov's inequality to pass from a bound on the expectation of $\QV{\theta}_{\cdot}$ to a bound on the probability it exceeds some level). 
So the only issue that remains is seeing that $\theta_t$ also has finite quadratic variation on the set $\{\sigma_{\tilde{B}}<\infty\}$. Again by transience, if $X_t$ hits $\{r=\tilde{B}\}$, then it almost surely hits, say, $\{r=\rho+1\}$ at some later time. At this point, the same result applies, so that $r_t$ stays above $\tilde{B}$ at all future times, and thus $\QV{\theta}_{\cdot}$ stays finite, with probability at least $1-\delta$. Since having finite quadratic variation is a tail-measurable property, by iterating this argument, we see that $\QV{\theta}_{\zeta}$ is almost surely finite, as desired.

\begin{THM}\label{THM:MartCon}
Suppose that $M$ is Cartan-Hadamard manifold of dimension $m\geq 4$. Let $(r,\theta)$ be polar coordinates around some $p$, and let $X_t$ be a rank-$n$ martingale, for $3\leq n< m$. Then if $M$ satisfies the curvature estimate
\[
K(r,\theta,\Sigma) \leq -\frac{\frac{1}{2}+\eps}{r^2\log r} \quad\text{when $r>R$, and for all $\theta$ and $\Sigma\ni\partial_r$,}
\]
for some $\eps>0$ and $R>1$, we have that $\theta_t=\theta(X_t)$ almost surely has finite quadratic variation. Further, for any $0<\delta<1$, there exists $\rho$ (depending only on $M$ and $n$) such that, if $r_0>\rho$, then $\QV{\theta}_{\zeta} <\delta$ with probability at least $1-\delta$.
\end{THM}

\begin{proof} First note that Theorem \ref{THM:Trans} implies that $X_t$ is transient.

Without loss of generality, we can assume that $\eps<1/2$. Then the estimates in Section \ref{Sect:NBig} show that, for some $c>0$ and $B>1$, we can take
\[
\hat{G}(r) = cr\lp \log r\rp^{\frac{1}{2}+\eps} \quad\text{for $r>B$.}
\]
Thus Equation \eqref{Eqn:QVDef} and Inequality \eqref{Eqn:4Estimates} imply that
\[
\E\lb \QV{\theta}_{\sigma_{\tilde{B}}\wedge\zeta}\rb
\leq \frac{n}{c^2} \E\lb \int_0^{\sigma_{\tilde{B}}}\frac{1}{r^2_s\lp\log r_s \rp^{1+2\eps}} \, ds\rb
\]
for any $\tilde{B}>B$. Choose $\beta>0$. In light of the discussion preceding the theorem, it's enough to show that for some $\tilde{B}\geq B$, there exists $\rho>\tilde{B}$ such that
\[
\E\lb \int_0^{\sigma_{\tilde{B}}\wedge\zeta} \frac{1}{r^2_s\lp\log r_s \rp^{1+2\eps}} \, ds\rb <\beta
\]
whenever $r_0>\rho$.

Recall that, in this case,
\[
v_t > \frac{\frac{3}{4}\lp n-1+\sin^2\vphi\rp}{2r_t}\lp 1+\frac{\frac{1}{2}+\eps}{\log r} \rp
\quad\text{for $r_t\geq B$}.
\]
Then for $\alpha>0$, Ito's rule plus the bound on $v_t$ gives (for $r_t>B$)
\[\begin{split}
& \quad\quad d\lp \frac{-1}{\lp\log r_t \rp^{\alpha}} \rp_t = \frac{\alpha\cos\vphi_t}{r\lp\log r\rp^{1+\alpha}}\, dW_t 
+\gamma_t \, dt  \quad \text{where} \\
& \gamma_t \geq \frac{\alpha}{2r_t^2 \lp \log r_t\rp^{1+\alpha}} \\
& \quad\quad\quad \times \lb \frac{3}{4}\lp 1+\frac{\frac{1}{2}+\eps}{\log r_t} \rp \lp n-1+\sin^2\vphi_t \rp-
\cos^2\vphi_t\lp1+\frac{\alpha+1}{\log r_t} \rp \rb .
\end{split}\]
We now take $\alpha<2\eps$. Then using that $n\geq 3$, it's easy to see that we can find $\tilde{B}$ (depending only on $\alpha$ and $\eps$) such that the quantity in brackets in the above expression is at least $1/4$ whenever $r_t>\tilde{B}$. Since we also have $1+\alpha<1+2\eps$, we see that for some $D>0$,
\[
\E\lb \int_0^{\sigma_{\tilde{B}}\wedge\zeta} \frac{1}{r^2_s\lp\log r_s \rp^{1+2\eps}} \, ds\rb <
D \E\lb \int_0^{\sigma_{\tilde{B}}\wedge\zeta} \gamma_s \, ds\rb ,
\]
We further have, using a standard dominated converge argument, that
\[\begin{split}
\E\lb \int_0^{\sigma_{\tilde{B}}\wedge\zeta} \gamma_s \, ds\rb &=
\E \lb \frac{-1}{\lp\log r_{\sigma_{\tilde{B}}\wedge\zeta} \rp^{\alpha}} \rb -  \frac{-1}{\lp\log r_0 \rp^{\alpha}}  \\
& =  \frac{1}{\lp\log r_0 \rp^{\alpha}} - \frac{1}{\lp\log \tilde{B} \rp^{\alpha}}\Prob\lp\sigma_{\tilde{B}} 
<\infty \rp ,
\end{split}\]
where in the last line we've used that $\lim_{t\rightarrow\zeta}r_t=\infty$ to see that $1/(\log r_{\sigma_{\tilde{B}}\wedge\zeta})^{\alpha}$ is zero on the set where $\sigma_{\tilde{B}} =\infty$. Also note that it's the first line where our central idea of controlling an integral along paths by recognizing it as the drift of a semi-martingale (with well-controlled asymptotic behavior) is used. At any rate, the last line of the above is certainly less than $1/(\log r_0)^{\alpha}$ (recall that $\tilde{B}>1$), and that we can make the last line arbitrarily small by making $r_0$ large. Thus it's clear that we can find $\rho>\tilde{B}$ such that
\[
\E\lb \int_0^{\sigma_{\tilde{B}}\wedge\zeta} \frac{1}{r^2_s\lp\log r_s \rp^{1+2\eps}} \, ds\rb <\beta
\]
whenever $r_0>\rho$. This completes the proof.
\end{proof}

To get the complementary result for infinite quadratic variation of $\theta_t$, we again make a preliminary observation.  
Suppose we show that there exists $\tilde{B}> 0$ such that
\[
\QV{\theta}_{\sigma_{\tilde{B}}\wedge\zeta} =\infty \quad\text{almost surely on the set $\{\sigma_{\tilde{B}}=\infty\}$,}
\]
whenever $r_0>\tilde{B}$. Then this is enough to prove that $\theta_t$ has infinite quadratic variation, almost surely. To see this, note that our earlier discussion shows that $\theta_t$ has infinite quadratic variation on $\{\sigma_{\tilde{B}}=\infty\}$ (up to a set of probability zero), and thus we need to consider the set  $\{\sigma_{\tilde{B}}<\infty\}$. The point is that, analogously to the above, transience implies that almost every path that hits $\{r=\tilde{B}\}$ subsequently hits $\{r=r_0\}$. From there, the same result applies, so that paths which don't hit $\{r=\tilde{B}\}$ again almost surely have the property that $\theta_t$ has infinite quadratic variation. Iterating this argument (and noting that almost every path eventually has a last exit time from $\{r\leq \tilde{B}\}$), gives the desired result.

\begin{THM}\label{THM:MartNoCon}
Suppose that $M$ is Cartan-Hadamard manifold of dimension $m\geq 4$. Let $(r,\theta)$ be polar coordinates around some $p$, and let $X_t$ be a rank-$n$ martingale, for $3\leq n< m$. Then if $M$ satisfies the curvature estimate
\[
K(r,\theta,\Sigma) \geq -\frac{1/2}{r^2\log r} \quad\text{when $r>R$, and for all $\theta$ and $\Sigma\ni\partial_r$,}
\]
for some $R>1$,
we have that $\theta_t=\theta(X_t)$ almost surely has infinite quadratic variation.
\end{THM}

\begin{proof} Again, Theorem \ref{THM:Trans} implies that $X_t$ is transient.

Using the estimates of Section \ref{Sect:NBig}, we have that, for some $c>0$ and $B>R$, we can take
\[\begin{split}
& \check{G}(r) \leq c r\lp \log r \rp^{\frac{1}{2}}\lp\ltwo r \rp^{\frac{1}{2}} \quad\text{for $r>B$, and} \\
& \frac{3}{4}  \frac{n-1+\sin^2\vphi_t}{2r_t}\leq v_t\leq \frac{5}{4}  \frac{n-1+\sin^2\vphi_t}{2r_t}
\quad\text{for $r_t>B$.}
\end{split}\]
Hence, Equation \eqref{Eqn:QVDef} and Inequality \eqref{Eqn:4Estimates} imply that
\[
\QV{\theta}_{\sigma_{\tilde{B}}\wedge\zeta}
\geq \frac{n-1}{c^2} \int_0^{\sigma_{\tilde{B}}\wedge\zeta} \frac{1}{r_s^2 \lp\log r_s\rp \lp\ltwo r_s\rp}\, ds
\]
Thus, in light of the discussion before the theorem, it's enough for us to show that for some $\tilde{B}> B$,
\[
\int_0^{\sigma_{\tilde{B}}\wedge\zeta} \frac{1}{r_s^2 \lp\log r_s\rp \lp\ltwo r_s\rp}\, ds = \infty \quad\text{almost surely on the set $\{\sigma_{\tilde{B}}=\infty\}$,}
\]
whenever $r_0>\tilde{B}$.

Note that
\[\begin{split}
\lp \lthree\rp^{\prime}(r) &= \frac{1}{r \lp\log r\rp\lp \ltwo r\rp} \quad\text{and}\\
\lp \lthree\rp^{\prime\prime}(r) &=  \frac{-1}{r^2 \lp\log r\rp\lp \ltwo r\rp}\lp 1+\frac{1}{\log r}+\frac{1}{\lp\log r\rp\lp \ltwo r\rp} \rp .
\end{split}\]
Then Ito's rule plus the two-sided bound on $v_t$ lets us compute that, for $r_t>B$,
\[\begin{split}
& \quad d\lp \lthree\rp_t r = \frac{\cos\vphi_t}{r_t \lp\log r_t\rp\lp \ltwo r_t\rp} \, dW_t +\gamma_t \, dt \quad\text{where} \\
& \frac{ \frac{3}{4}\lp n-1+\sin^2\vphi_t\rp -\cos^2\vphi_t\lp 1+\frac{1}{\log r_t}+\frac{1}{\lp\log r_t\rp\lp \ltwo r_t\rp}\rp}{2r^2_t \lp\log r_t\rp\lp \ltwo r_t\rp} 
\leq \gamma_t \\ 
& \leq 
\frac{ \frac{5}{4}\lp n-1+\sin^2\vphi_t\rp -\cos^2\vphi_t\lp 1+\frac{1}{\log r_t}+\frac{1}{\lp\log r_t\rp\lp \ltwo r_t\rp}\rp}{2r^2_t \lp\log r_t\rp\lp \ltwo r_t\rp} 
\end{split}\]
Observe that we can find $\tilde{B}>B$ such that the lower bound on $\gamma_t$ is positive when $r_t>\tilde{B}$. Thus $\lthree r_t$ is a (local) sub-martingale (on the set where $r>\tilde{B}$). Further, the upper bound implies that, perhaps after increasing $\tilde{B}$, we can find $D>0$ such that
\[
\frac{1}{D} \int_0^{\sigma_{\tilde{B}}\wedge\zeta} \gamma_s \, ds \leq  
\int_0^{\sigma_{\tilde{B}}\wedge\zeta} \frac{1}{r_s^2 \lp\log r_s\rp \lp\ltwo r_s\rp}\, ds
\]
whenever $r_0>\tilde{B}$.

By the transience of $r_t$, it follows that $\lthree r_{\sigma_{\tilde{B}}\wedge\zeta}=\infty$ almost surely on the set $\{\sigma_{\tilde{B}}=\infty\}$ (we assume that $r_0>\tilde{B}$). Now a (local) submartingale can diverge to $\infty$ only if its bounded variation part diverges to infinity (up to a set of probability zero). This follows from the fact that the martingale part either converges (if the quadratic variation remains bounded) or hits every real value infinitely often (if the quadratic variation increases without bound), up to a set of probability zero. Thus,
\[
\int_0^{\sigma_{\tilde{B}}\wedge\zeta} \gamma_s \, ds = \infty \quad\text{almost surely on the set $\{\sigma_{\tilde{B}}=\infty\}$,}
\]
whenever $r_0>\tilde{B}$. Since $1/D$ is just a positive constant, we've succeeded in showing that
\[
\int_0^{\sigma_{\tilde{B}}\wedge\zeta} \frac{1}{r_s^2 \lp\log r_s\rp \lp\ltwo r_s\rp}\, ds = \infty \quad\text{almost surely on the set $\{\sigma_{\tilde{B}}=\infty\}$,}
\]
whenever $r_0>\tilde{B}$. This completes the proof.
\end{proof}

\section{Angular convergence in the radially symmetric case}

As noted, while we have good control over the convergence or non-con\-ver\-gence of the martingale part of the angular process $\theta_t$, in general the drift term will not be so easy to handle. If we restrict our attention to the radially symmetric case, the expression for the drift becomes more manageable. Indeed, for the remainder of the paper, we assume the $M$ is radially symmetric and determine conditions under which the angular process converges.

\subsection{Basic computations}

If $M$ is radially symmetric around a point $p$, we can write the metric on $M$ in polar coordinates around $p$ as
\[
dr^2 + G^2(r) d\theta^2 \quad\text{where $d\theta^2$ is the standard metric on $\mS^{m-1}$,} 
\]
for a (smooth) function $G:[0,\infty)\rightarrow[0,\infty)$ with $G(0)=0$ and $G^{\prime}(0)=1$. Our notation is consistent in that $G$ gives the length of the natural Jacobi fields, is a solution to the (scalar) Jacobi equation for the radially symmetric sectional curvature function $K(r)=K(r,\theta,\Sigma)$ for all $\theta$ and $\Sigma\ni\partial_r$, and is bounded above and below by the appropriate comparison functions $\check{G}$ and $\hat{G}$, respectively. 

If we let $w_{i,t}$ be as above, we note that now the $w_{i,t}$ are orthogonal (except for $w_{1,t}$ if $\sin\vphi_t=0$), and we have
\[
|w_{1,t}|=\frac{|\sin\vphi_t|}{G(r_t)}\quad \text{and} \quad |w_{2,t}|=\cdots=|w_{n,t}|=\frac{1}{G(r_t)} .
\]
Of course, Inequality \eqref{Eqn:4Estimates} still holds.

The next step in computing the drift of $\theta_t$ is to compute the Hessian of $\theta$, in an appropriate sense. If we choose some $\hat{\theta}\in \mS^{m-1}$ and again let $(\theta_1,\ldots,\theta_{m-1})$ be normal coordinates on $\mS^{m-1}$ around $\hat{\theta}$, then at any point $(r,\hat{\theta})$ (with $r>0$), the metric is given in these coordinates by the diagonal matrix
\[
\begin{bmatrix}
1 &  & \,\,\,\,\,\, \bigzero &  \\
   & G^2(r) & & \\
  & \!\!\!\!\!\! \bigzero &  \ddots & \\
 & & & G^2(r)
\end{bmatrix}
\]
up to first order. In particular, this is sufficient to compute the Christoffel symbols at $(r,\hat{\theta})$, and thus also the Hessian of the coordinate functions at this point (with respect to these coordinates).

If we use the sub/superscript $r$ to denote the $r$ coordinate and $i$ to denote the $\theta_i$ coordinate, then we have that (at the point $(r,\hat{\theta})$ with $r>0$)
\[
\Gamma^r_{ii}= \lp G\cdot G^{\prime}\rp(r) \quad\text{and}\quad
\Gamma^i_{ir}=\Gamma^i_{ri}= \lp \frac{G^{\prime}}{G}\rp(r) ,
\]
and all other Christoffel symbols are zero. From here we compute that the Hessian of $\theta_i$, as a bilinear form, is given by 
\[
- \lp \frac{G^{\prime}}{G}\rp(r) \lp dr \otimes d\theta_i +  d\theta_i \otimes dr\rp .
\]

Assume that the process is at $(r,\hat{\theta})$). After potentially rotating our normal coordinates, we can assume that $w_{1,t}=\sin\vphi_t\partial_{\theta_1}$. Since $|\partial_{\theta_1}|=1/G(r)$ here, at this instant we see that the martingale part of $r_t$ is generated by $\cos\vphi_t \, dW^1_t$ and the martingale part of $\theta_{1,t}$ is generated by $(\sin\vphi_t/G(r_t)) \, dW^1_t$.  Then Ito's rule implies that, at this instant, the drift of $\theta_{1,t}$ is given by
\[
-\frac{G^{\prime}(r_t)}{G^2(r_t)} \sin\vphi_t \cos\vphi_t \, dt .
\]

Globally, we see that $\theta_t$ satisfies the SDE
\begin{equation}\label{Eqn:ThetaSDE} 
d\theta_t = \sum_{i=1}^n w_{i,t} \, dW^i_t
-\frac{G^{\prime}(r_t)}{G(r_t)}\cos\vphi_t w_{1,t}\, dt .
\end{equation}
Thus the total quadratic variation of $\theta_t$ is given by
\[
\QV{\theta}_{\zeta} = \int_0^{\zeta} \frac{n-1+\sin^2\vphi_t}{G^2(r_t)} \, dt ,
\]
although we won't explicitly use this, since the results of the previous section are sufficient here too. More important, the total drift (or total variation of the bounded variation part) of $\theta_t$ is given by
\[
\int_0^{\zeta} \lab \sin\vphi_t \cos\vphi_t\rab \frac{G^{\prime}(r_t)}{G^2(r_t)} \, dt ,
\]
which may be infinite.

We see that, in contrast to the situation for the quadratic variation, whether or not the total variation of the drift is finite or not depends in general on the behavior of $\vphi_t$. Two special cases in which $\theta_t$ is a martingale are worth mentioning.

\subsection{Vanishing angular drift}\label{Sect:VanishingDrift}

First, if $\varphi_t\equiv 0$, $X_t$ is restricted to a totally geodesic submanifold $N$ of dimension $n$ through $p$. In particular, we can find normal coordinates $(y_1,\ldots,y_m)$ around $p$ such that the $N$ is the set $\{y_{n+1}=\cdots=y_m=0\}$, and $N$ is simply a rotationally symmetric manifold of dimension $n$ with the metric given in polar coordinates by the same function $G(r)$ as for $M$. Then $X_t$ is just Brownian motion on $N$. (Thus we see that Brownian motion on a radially symmetric Cartan-Hadamard manifold is actually a special case of the above, in spite of our taking $n<m$ in our definition of a rank-$n$ martingale.) The drift term in Equation \eqref{Eqn:ThetaSDE} vanishes identically), so that the angular convergence or non-convergence of $X_t$ is given exactly by the conditions in Theorems \ref{THM:MartCon} and \ref{THM:MartNoCon}. In light of this, it is unsurprising that the conditions on $G$ in Theorems \ref{THM:MartCon} and \ref{THM:MartNoCon} are the same as those found by March \cite{March} for Brownian motion on radially symmetric Cartan-Hadamard manifolds. However, the approach used to prove both of these theorems differs from the method used to by March (and also in \cite{HsuBook}), which transforms the question of angular convergence to the question of whether or not a certain one-dimensional diffusion, coming from a time-change of the radial process, has finite lifetime (for which an answer is essentially known). In the case of rank-$n$ martingales, the appearance of $\vphi_t$ (which functions here mostly as a nuisance parameter) prevents us from being able to derive a simple one-dimensional diffusion, and it's not immediately clear how to adapt this method. Our approach gets around this, and also has the advantage of (arguably) being somewhat more elementary. (On the other hand, this earlier work is able to also get sharp curvature bounds in the $n=2$ case, which we don't address here.)

The second case is when $\vphi\equiv \pi/2$. Then again the drift term in Equation \eqref{Eqn:ThetaSDE} vanishes identically, and further, the evolution of $r_t$ is deterministic. That is, Equation \eqref{Eqn:BasicDecomp} reduces to an ODE (independent of $\theta$), namely
\[
dr_t = \frac{n}{2}\frac{G^{\prime}(r_t)}{G(r_t)} \, dt .
\]
Thus $X_t$ is supported on the (time-varying) sphere of radius $r(t)$ that comes from solving this ODE with the initial condition $r(0)=r_0>0$ (where we assume $X_t$ doesn't start at $p$ to avoid degeneracy in the polar coordinates and ensure that $\Lambda_t$ is continuous). Further, in the case when $n=m-1$, we must have that $\Lambda_t$ is the tangent plane to the sphere of radius $R(t)$ around $p$ (at $\theta_t$). Then we see that $X_t$ is the inhomogeneous diffusion along the backward mean curvature flow starting from the sphere of radius $r_0$ around $p$ (and where the diffusion is started at $(r_0,\theta_0)$. Further, $\theta_t$ is a time-changed Brownian motion on $\mS^{m-1}$, where the time-change factor is a function of the radius $r(t)$. Finally, $\theta_t$ converges or not (corresponding to whether or not the changed time remans bounded) according to Theorems \ref{THM:MartCon} and \ref{THM:MartNoCon}.

\subsection{Results for rank-$n$ martingales}

When the drift does not vanish, the situation is more complicated. Not only does the finiteness of the total variation of the drift depend on $\vphi_t$, as already mentioned, but (focusing our attention just on the drift) a path on the sphere of locally bounded variation that accumulates infinite total variation over its lifetime may or may not converge, depending on the particular ``cancellations'' that occur. The upshot of these considerations is that we cannot give as complete a description of the behavior of $\theta_t$ as we can for its martingale part alone.

We are able to give curvature bounds under which $\theta_t$ must converge, regardless of the behavior of $\vphi_t$, and doing so is the goal of this section. This is clearly a result of the same type as Theorem \ref{THM:Kendall}. The difference is that we allow some quadratic decay of the upper curvature bound, which goes beyond the sub-quadratic decay mentioned by Goldberg and Mueller \cite{GoldbergMueller}. On the other hand, we only consider the radially symmetric case (and where $n\geq 3$). It would not be surprising if a similar result held in more generality, but restricting ourselves to the rotationally symmetric case also allows us to continue to use the sort of elementary stochastic techniques we have employed through the paper.

We require a preliminary lemma.

\begin{Lemma}\label{Lem:Halfspace}
Let $M$ be an $m$-dimensional Cartan-Hadamard manifold that is rotationally symmetric about some point $p$, let $r$ be the distance from $p$ (as usual), and let $(x_1,\ldots,x_m)$ be any set of normal coordinates around $p$. If $X_t$ is a rank-$n$ martingale, for $2\leq n<m$, with an initial distribution (not necessarily a point mass) such that $\Prob\lp x_1(X_0)>0 \text{ and }r(X_0)<A \rp>0$, for some $A>0$, then for any $B>A$
\[
\Prob\lp x_1\lp X_{\sigma_B}\rp >0 \text{ and } \sigma_{B}<\infty \rp >0, 
\]
where $\sigma_B$ is the first hitting time of the set $\{r=B\}$.
\end{Lemma}
\begin{proof}
By radial symmetry, the hypersurface $\{x_1=0\}$ is totally geodesic. Let $\rho$ be the signed distance from $\{x_1=0\}$, with $\rho>0$ corresponding to $x_1>0$. Because all the sectional curvatures are non-negative, on the set $\{\rho>0\}$, the Hessian of $\rho$ is bounded from below by 0 (which is the analogous quantity for the comparison manifold, Euclidean space). In particular (writing $\rho_t=\rho(X_t)$ as usual), if $\rho_0>0$ then $\rho_t$ is a sub-martingale until the first hitting time of $\{\rho=0\}$. Denote this hitting time by $\eta$. Then if $\eta$ is almost surely finite, $\E\lb\QV{\rho}_{\eta}\rb$ must be infinite, simply by comparison with a (time-changed) one-dimensional Brownian motion.

Now by conditioning on the event $\{x_1(X_0)>0 \text{ and }r(X_0)<A\}$, we can assume that this holds almost surely, without loss of generality. Then we know from Lemma \ref{BasicLemma} that $\sigma_B<\infty$ almost surely. Moreover, we showed, in the proof of Lemma \ref{BasicLemma}, that the expectation of $\sigma_B$ is finite (in particular, less than $B^2/n$). Because $\rho$ is a distance function, $\QV{\rho}_{t\wedge\sigma_B} = t\wedge\sigma_B$, and thus $\E\lb \QV{\rho}_{\sigma_B}\rb<\infty$. Comparing this to the results for $\eta$ above, it follows that $\eta>\sigma_B$ with positive probability. Since $\eta>\sigma_B$ implies that $x_1(X_{\sigma_B})>0$ (just by the definitions of $\rho$ and $\eta$), the lemma is now proved.
\end{proof}

Note that, applied to minimal submanifolds, this lemma gives a type of maximum principle, relative to minimal hypersurfaces of the form $\{y_1=0\}$.

\begin{THM}\label{THM:AngleCon}
Suppose that $M$ is Cartan-Hadamard manifold of dimension $m\geq 4$, and that $M$ is radially symmetric around some point $p$. Let $(r,\theta)$ be polar coordinates around $p$, and let $X_t$ be a rank-$n$ martingale, for $3\leq n< m$. Further, assume that $M$ satisfies the curvature estimate
\[
-a^2 \leq
K(r,\theta,\Sigma) \leq -\frac{2+\eps}{r^2} \quad\text{when $r>R$, and for all $\theta$ and $\Sigma\ni\partial_r$,}
\]
for some $a>0$, $\eps>0$, and $R>1$.
Then we have that $\theta_t=\theta(X_t)$ converges, almost surely, as $t\rightarrow\zeta$, and this limit $\theta_{\zeta}$ is not a point mass. Further, for any $0<\delta<1$, there exists $\rho$ (depending only on $M$ and $n$) such that, if $r_0>\rho$, then $\theta_{\zeta}\in B_{\delta}(\theta_0) \subset\mS^{m-1}$ with probability at least $1-\delta$.
\end{THM}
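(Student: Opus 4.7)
The approach is to invoke the technical observation made immediately before the theorem, which reduces everything to producing, for any $\beta > 0$, a pair $(\tilde{B},\rho)$ with $\rho > \tilde{B}$ and
\[
\E\lb \int_0^{\sigma_{\tilde{B}}\wedge\zeta} \frac{1}{G^2(r_s)}\d s \rb < \beta
\]
whenever $r_0 > \rho$. To produce such a bound I would seek a non-negative, decreasing function $F$ on $[\tilde{B},\infty)$ with $F(r) \ra 0$ as $r \ra \infty$, for which the drift of $F(r_t)$,
\[
F'(r_t)\, v_t + \tfrac{1}{2} F''(r_t) \cos^2\vphi_t ,
\]
is bounded above by $-1/G^2(r_t)$ pointwise for $r_t \geq \tilde{B}$ and all admissible $\vphi_t$ (noting that, by radial symmetry, $v_t = \tfrac{n-1+\sin^2\vphi_t}{2}\cdot G'(r_t)/G(r_t)$). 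Given such an $F$, Ito's rule makes $F(r_t)$ a non-negative local supermartingale on $\{r>\tilde{B}\}$, and a standard optional-stopping / monotone-convergence argument yields $\E\lb\int_0^{\sigma_{\tilde{B}}\wedge\zeta} 1/G^2(r_s) \d s\rb \leq F(r_0)$, which can be made less than $\beta$ by taking $r_0$ large.

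For case (1), where $n = 2$ and $K \leq -a^2$, comparison gives $G(r) \geq a^{-1}\sinh(ar)$ (so $1/G^2(r) \sim 4 a^2 e^{-2ar}$) and $G'(r)/G(r) \geq a$. I would try $F(r) = e^{-\alpha r}$ for any fixed $\alpha \in (0,a)$. Writing $s = \sin^2\vphi_t \in [0,1]$, the drift rearranges to
\[
\frac{\alpha e^{-\alpha r}}{2}\lb \alpha(1-s) - (1+s)\frac{G'(r)}{G(r)}\rb \leq \frac{\alpha e^{-\alpha r}}{2}\lb (\alpha-a) - s(\alpha+a) \rb \leq \frac{\alpha(\alpha-a)\, e^{-\alpha r}}{2} < 0,
\]
and since $\alpha < a < 2a$, $e^{-\alpha r}$ decays strictly slower than $e^{-2ar}$, so the desired drift inequality holds for $r \geq \tilde{B}$ once $\tilde{B}$ is taken large enough.

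For case (2), where $n \geq 3$ and $K(r,\theta,\Sigma) \leq -(1/2+\eps)/(r^2\log r)$, the computations of Section \ref{Sect:NBig} give $G(r) \geq c\,r(\log r)^{1/2+\eps}$ for large $r$ (so $1/G^2(r) \lesssim 1/(r^2(\log r)^{1+2\eps})$), while the general Cartan-Hadamard bound $G'/G \geq 1/r$, together with $n \geq 3$, gives $v_t \geq (n-1)/(2r_t) \geq 1/r_t$. I would try $F(r) = (\log r)^{-\beta}$ for any $\beta \in (0, 2\eps)$. Direct differentiation shows $F''(r) > 0$ (so one may bound $\tfrac{1}{2} F''\cos^2\vphi_t \leq \tfrac{1}{2} F''$), and after collecting terms, the drift of $F(r_t)$ is at most $-\beta(n-2)/(2 r^2 (\log r)^{\beta+1})$ to leading order in $1/\log r$; this is negative because $n \geq 3$, and its magnitude dominates $1/G^2(r)$ for large $r$ because $\beta + 1 < 1 + 2\eps$.

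The main obstacle is verifying these pointwise drift inequalities uniformly over the unobserved angle $\vphi_t$, since both $v_t$ and $\cos^2\vphi_t$ depend on it. In case (1), the worst case turns out to be $\sin^2\vphi_t = 0$ (i.e., when $\Lambda_t$ contains $\partial_r$), and the condition $\alpha < a$ is exactly what is needed; in case (2), one simply discards the $\vphi_t$ dependence using the uniform lower bound $v_t \geq (n-1)/(2r_t)$ together with $F'' > 0$. Once the drift estimates are established, the rest (optional stopping giving the $Z$-expectation bound, followed by the technical observation to extract both almost sure convergence of $\theta_t$ and the quantitative $\theta_\zeta \in B_\delta(\theta_0)$ statement) is routine.
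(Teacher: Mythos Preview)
Your proposal is correct and follows the same overall strategy as the paper: reduce to bounding $\E\lb\int_0^{\sigma_{\tilde B}\wedge\zeta} G^{-2}(r_s)\,ds\rb$ via the pre-theorem technical observation, then find a decreasing function $F$ whose drift is bounded above by $-1/G^2$ and apply optional stopping. In case~(2) you even use the same Lyapunov function $F(r)=(\log r)^{-\beta}$ with $\beta<2\eps$ that the paper uses (written there as $-1/(\log r)^{\alpha}$). Two differences are worth noting. First, in case~(1) the paper takes the simpler $F(r)=-1/r$: the drift $\gamma_t = v_t/r_t^2 - \cos^2\vphi_t/r_t^3$ is at least $a/(3r_t^2)$ for large $r_t$, which already dominates $1/\sinh^2(ar_t)$; your exponential choice $e^{-\alpha r}$ also works and is perhaps more naturally matched to the exponential growth of $G$, but it is not needed. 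Second, in case~(2) the paper invokes the sharper estimate $v_t > \tfrac{3}{4}(n-1+\sin^2\vphi_t)/(2r_t)\cdot(1+(\tfrac12+\eps)/\log r_t)$ from Section~\ref{Sect:NBig}, whereas you only use the crude Cartan--Hadamard bound $G'/G\geq 1/r$ to get $v_t\geq (n-1)/(2r_t)$; since $n\geq 3$ this already makes the leading term of the drift $-\beta(n-2)/(2r^2(\log r)^{\beta+1})$ negative, so your version is a genuine simplification of the paper's computation.
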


\begin{proof} By Theorem \ref{THM:MartCon} and the upper curvature bound, we know that the martingale part of $\theta_t$ almost surely has finite quadratic variation and that this quadratic variation can be made less than $\delta_0\in(0,1)$ with probability at least $1-\delta_0$ by taking $r_0$ large enough. We also see that $X_t$ is transient. Next, we wish to establish the analogous result for the the total variation of the drift of $\theta_t$. In light of our earlier discussion of the drift, we see that the total variation is pathwise bounded from above by
\[
\int_0^{\zeta} \frac{G^{\prime}}{G^2}(r_s)\, ds ,
\]
and we recall that the integrand is always positive.

The constant lower curvature bound and the results of Section \ref{Sect:ConstEst} imply that, for some $C>0$ depending only on $a$ and some $B>R$, we have
\[
\frac{G^{\prime}}{G}(r) \leq C \quad\text{whenever $r>B$.} 
\]
(That $G^{\prime}/G$ can be estimated from above by $\check{G}^{\prime}/\check{G}$ is just the radially symmetric version of the Hessian comparison theorem, see Lemma 6.4.3 of \cite{HsuBook}.) The upper curvature bound implies that, after possibly increasing $B$, for some $c>0$, we have both
\[
\frac{1}{G}(r) < \frac{c}{r^{2+\delta_1}} \quad\text{for $r>B$}
\text{ and}\quad v_t> \frac{n-1+\sin^2 \varphi_t}{r_t} \quad\text{for $r_t>B$} ,
\]
where $\delta_1$ depends on $\eps$ as in Section \ref{Sect:NBig}. One consequence is that
\[
\frac{G^{\prime}}{G^2}(r_t) < \frac{C c}{r^{2+\delta_1}} \quad\text{for $r>B$} .
\]
Recall also that $G^{\prime}>0$, and thus $G^{\prime}/G^2$ is always positive.

Further, Ito's rule gives, for $r_t>B$,
\[\begin{split}
& \quad d\lp \frac{-1}{\log r}\rp_t = 
\frac{\cos\vphi_t}{r\lp\log r\rp^{2}}\, dW_t 
+\gamma_t \, dt  \quad \text{where} \\
& \gamma_t= \frac{v_t}{r\lp\log r\rp^{2}} - \frac{\cos^2\varphi_t}{2r^2\lp\log r\rp^{2}}\lb 1+\frac{2}{\log r}\rb .
\end{split}\]
Then the bound on $v_t$ and the assumption that $n\geq 3$ imply that, after possibly increasing $B$,
\[\begin{split}
\gamma_t &\geq  \frac{1}{r^2\lp\log r\rp^{2}}\lc n-1+\sin^2\varphi_t-\frac{\cos^2\varphi_t}{2}
\lb 1+\frac{2}{\log r}\rb\rc \\
&> \frac{1}{r^2\lp\log r\rp^{2}} \quad \text{for $r>B$} .
\end{split}\]
Thus, after possibly increasing $B$ again, we have 
\[
\gamma_t > \frac{C c}{r^{2+\delta_1}} \quad\text{for $r>B$} .
\]

Then, just as in the proof of Theorem \ref{THM:MartCon}, we have, for any $r_0>B$,
\[\begin{split}
\E\lb \int_0^{\sigma_{B}\wedge\zeta} \frac{G^{\prime}}{G^2}(r_s) \, ds\rb &<
\E\lb \int_0^{\sigma_{B}\wedge\zeta} \gamma_s \, ds\rb \\
& =  \frac{1}{\log r_0} - \frac{1}{\log B}\Prob\lp\sigma_{B} <\infty \rp .
\end{split}\]
This last line can be made arbitrarily close to zero by taking $r_0$ large. Thus (again as in the proof of Theorem \ref{THM:MartCon}) the total variation of the drift of $\theta_t$ is almost surely finite and, for any $\delta_0\in (0,1)$, we can find $\rho>B$ (depending only on $M$ and $n$) such that if $r_0>\rho$, the total variation of the drift of $\theta_t$ is less than $\delta_0$ with probability at least $1-\delta_0$.

Next, we wish to see that $\theta_t$ converges almost surely.  If we again let $y_1,\ldots, y_m$ be standard Euclidean coordinates on $\bR^m$, and thus also functions on $\mS^{m-1}$ via the standard embedding, all of the $y_i$ have bounded gradient and bounded Hessian on $\mS^{m-1}$. It follows that $y_i(\theta_t)$ has finite quadratic variation and finite drift (for each $i$), and thus each $y_i(\theta_t)$ converges almost surely (as $t\rightarrow \zeta$). So $\theta_t$ converges almost surely to some $\theta_{\zeta}\in\mS^{m-1}$. Similarly, for any $\tilde{\delta}\in(0,1)$, it's clear that if the quadratic variation and total variation of the drift of $\theta_t$ are both small enough with high enough probability, then $y_i(\theta_t)$ stays within distance $\tilde{\delta}$ of $y_i(\theta_0)$ with probability at least $1-\tilde{\delta}$. Then it follows from our earlier estimates that for any $0<\delta<1$, there exists $\rho$ (depending only on $M$ and $n$) such that, if $r_0>\rho$, then $\theta_{\zeta}\in B_{\delta}(\theta_0)$ with probability at least $1-\delta$, as desired.

Finally, to prove that $\theta_{\zeta}$ is not a point mass, we proceed by contradiction. Namely, assume that $\theta_{\zeta}=\hat{\theta}$, for some $\hat{\theta}\in\mS^{m-1}$, almost surely. At any finite time $\tau>0$, Equation \eqref{Eqn:ThetaSDE} implies that $\theta_{\tau}$ cannot be a point mass. Thus, we can find normal coordinates $(x_1,\ldots,x_m)$ around $p$ such that $x_1(r,\hat{\theta})<0$ whenever $r>0$ and $x_1(X_{\tau})>0$ with positive probability. Note that $\{x_1=0\}$ corresponds to a great circle in the sphere at infinity, and thus the distance, in $\mS^{m-1}$, between $\hat{\theta}$ and $\{x_1=0\}$ is positive (and $\hat{\theta}$ is in the ``negative $x_1$'' hemisphere). Denote this distance by $d$. By the previous parts of the theorem, there exists $A>0$ such that, if $r_t\geq A$, then $\theta_{\zeta}\in B_{d/2}(\theta_t)$ with probability at least $1/2$.

After possibly increasing $A$, we know that we have $x_1(X_{\tau})>0$ and $r(X_{\tau})<A$ with positive probability. Thus, Lemma \ref{Lem:Halfspace} implies that $x_1(X_{\sigma_{A}})>0$ with positive probability; let the set of such paths be denoted by $S$. Observe that for paths in $S$, the $\mS^{m-1}$-distance between $\theta_{\sigma_A}$ and $\hat{\theta}$ is at least $d$. Further, by our choice of $A$, $\Prob\lp \theta_{\zeta}\in B_{d/2}(\theta_{\sigma_A}) | S\rp \geq 1/2$, and then by our choice of $d$ and $S$, $\Prob\lp \theta_{\zeta}\not\in B_{d/2}(\hat{\theta}) | S\rp \geq 1/2$. Since $S$ has positive probability, this contradicts our assumption that $\theta_{\zeta}=\hat{\theta}$ almost surely, and this contradiction finishes the proof.

\end{proof}

\subsection{Geometric consequences}

The geometric implications of Theorem \ref{THM:AngleCon} are not obvious in every context (such as for ancient solutions of the mean curvature flow). Nonetheless, there are some things we can say.

First, again consider a rank-$n$ sub-Riemannian structure on $M$, with the restriction metric and a volume form such that $\Lap_s$ gives rise to a rank-$n$ martingale (as discussed in Section \ref{Sect:SubR}). Then let $X_t$ be the associated diffusion. In this case, $X_t$ is Markov (indeed, the $v_{i,t}$ in Equation \eqref{Eqn:RankN} can and should be chosen to be locally smooth, so that the equation has a unique solution by standard results for SDEs) and has a positive density  with respect to the volume form at any positive time (by a famous result of H\"ormander and later via the Malliavin calculus). Assume that $M$ (and the sub-Riemannian structure) satisfy the hypotheses of Theorem \ref{THM:AngleCon}. We now let $U$ be an open, nonempty subset of $\mS^{m-1}$ such that the complement $U^c$ has non-empty interior, and let $p(y)$ be the probability that the diffusion, started from any $y=X_0\in M$, has $\theta_{\zeta}\in U$. Then $p$ is a non-constant, bounded $\Lap_s$-harmonic function on $M$. (The $\Lap_s$-harmonicity is a consequence of the facts that $X_t$ is Markov and the event $\theta_{\zeta}\in U$ is tail-measurable.) Indeed, for $\tilde{\theta}\in U$, we see that $p(\tilde{r},\tilde{\theta})\rightarrow 1$ as $\tilde{r}\rightarrow \infty$, and similarly, for $\tilde{\theta}$  in the interior of $U^c$, $p(\tilde{r},\tilde{\theta})\rightarrow 0$ as $\tilde{r}\rightarrow \infty$. These observations form the basis for a probabilistic approach to a version of the Dirichlet problem at infinity, relative to the sphere at infinity determined by the original Riemannian structure on $M$ (see Proposition 6.1.1 of \cite{HsuBook}). However, we don't pursue this any further (as already mentioned) and instead simply give the following corollary of Theorem \ref{THM:AngleCon}.

\begin{Cor}\label{Cor:sRHarm}
Suppose that $M$ is Cartan-Hadamard manifold of dimension $m\geq 4$, and that $M$ is radially symmetric around some point $p$. Let $(r,\theta)$ be polar coordinates around $p$.  With $3\leq n<m$, consider a rank-$n$ sub-Riemannian structure on $M$, with the restriction metric and a volume form such that $\Lap_s$  gives rise to a rank-$n$ martingale. Then if $M$ satisfies the curvature estimate
\[
K(r,\theta,\Sigma) \leq -\frac{\frac{1}{2}+\eps}{r^2\log r} \quad\text{when $r>R$, and for all $\theta$ and $\Sigma\ni\partial_r$,}
\]
for some $\eps>0$ and $R>1$, we have that $M$ admits a non-constant, bounded, $\Lap_s$-harmonic function.
\end{Cor}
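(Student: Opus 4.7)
The plan is to execute in full the construction sketched in the paragraph preceding the corollary. Let $X_t$ be the diffusion associated to $\frac{1}{2}\Lap_s$; by Section \ref{Sect:SubR} it is a rank-$n$ martingale on $M$, and the curvature hypotheses are precisely those of Theorem \ref{THM:AngleCon}. So $\theta_t$ converges almost surely to a random limit $\theta_\zeta \in \mS^{m-1}$, and, crucially, for every $\delta \in (0,1)$ there exists $\rho > 0$ (depending only on $M$ and $n$) such that $\Prob\lp \theta_\zeta \in B_\delta(\theta_0) \rp \geq 1-\delta$ whenever $r_0 > \rho$.

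Now fix any open nonempty $U \subset \mS^{m-1}$ whose complement $U^c$ also has nonempty interior, and define $p(y) = \Prob^y\lp \theta_\zeta \in U \rp$, where the superscript records the starting point $X_0 = y$. Clearly $0 \leq p \leq 1$. To see that $p$ is $\Lap_s$-harmonic, observe that the frame $v_{i,t}$ can be chosen smooth, so the defining SDE has unique strong solutions and $X_t$ is Markov; since the event $\{\theta_\zeta \in U\}$ is tail-measurable, the Markov property yields that $t \mapsto p(X_t)$ is a bounded martingale, so $p$ is a (weak) solution to $\Lap_s p = 0$. Because $\Lap_s$ is a sum of squares of a bracket-generating family, Hörmander's theorem upgrades this to smoothness and $p$ is classically $\Lap_s$-harmonic. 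To see that $p$ is non-constant, pick $\tilde\theta_1 \in U$ and $\tilde\theta_0$ in the interior of $U^c$, choose $\delta \in (0, 1/4)$ small enough that $B_\delta(\tilde\theta_1) \subset U$ and $B_\delta(\tilde\theta_0) \subset U^c$, and let $\rho$ be as above. For any $r > \rho$, applying the quantitative statement of Theorem \ref{THM:AngleCon} at the starting points $(r, \tilde\theta_1)$ and $(r, \tilde\theta_0)$ yields $p(r, \tilde\theta_1) \geq 1 - \delta > 3/4$ and $p(r, \tilde\theta_0) \leq \delta < 1/4$, so $p$ is non-constant.

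The only step that is not bookkeeping is justifying that $p$ is a genuine $\Lap_s$-harmonic function rather than merely an abstract-Markov-harmonic one, and this is where the sum-of-squares hypothesis pays off: hypoellipticity of $\Lap_s$ gives smoothness of $p$, and the martingale property of $p(X_t)$ identifies the generator as $\frac{1}{2}\Lap_s$ on smooth functions, so $\Lap_s p = 0$ in the classical sense. Everything else is a direct application of Theorem \ref{THM:AngleCon} together with basic Markov-process manipulations.
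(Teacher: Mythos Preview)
Your proposal is correct and follows essentially the same route as the paper's own argument: define $p(y)=\Prob^y(\theta_\zeta\in U)$, invoke the Markov property and tail-measurability for harmonicity, and use the quantitative part of Theorem \ref{THM:AngleCon} to show $p$ takes values near $1$ and near $0$ at points with large $r$ and $\theta$-coordinate in $U$ or in the interior of $U^c$, respectively. The only minor difference in emphasis is that the paper cites H\"ormander's theorem for the positive density of the transition kernel, whereas you invoke hypoellipticity to upgrade the weak harmonicity of $p$ to classical smoothness; both are standard consequences of the same result and your formulation is arguably the more directly relevant one for the harmonicity claim.
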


Similar logic shows that, under the same hypotheses on curvature, an $n$-dim\-en\-sion\-al minimal submanifold $N$ will admit a non-constant, bounded, harmonic function (where the harmonicity is with respect to the induced Laplacian on $N$, of course). Here, the relationship between harmonic functions and the sphere at infinity (of $M$) is less straightforward, since the accumulation points of $N$ in this sphere at infinity will generally have a more complicated structure. However, in order to establish the existence of a non-constant, bounded, harmonic function, we need only show there is some set $U$ as above such that we can always find points with $\theta$-coordinates in each of $U$ and the interior of $U^c$ for arbitrarily large $r$-coordinates. The only way we could fail to be able to find a set $U$ as desired is if there were some $\hat{\theta}$ such that for all sequences $x_i \in N$ with $r(x_i)\rightarrow \infty$, we had $\theta(x_i)\rightarrow \tilde{\theta}$. However, this would imply that $\theta_{\zeta}= \hat{\theta}$ for every path (since $N$ is transient), contradicting the fact that $\theta_{\zeta}$ is not a point mass. With this in mind, we have proven Corollary \ref{Cor:MinHarm}.

\def\cprime{$'$}

\end{document}